\def\Ad{\mathop{\rm Ad}\nolimits}
\def\ad{\mathop{\rm ad}\nolimits}
\def\Cb{{\mathbb C}}
\def\Rb{{\mathbb R}}
\def\b{\beta}
\def\d{\delta}
\def\s{\sigma}
\def\0b{\bf 0}
\def\nb{\nabla}
\def\ot{\otimes}
\def\ra{\rightarrow}
\def\lra{\longrightarrow}
\def\rt{\triangleright}
\def\lt{\triangleleft}
\def\cl{\blacktriangleright\hspace{-4pt} < }
\def\crr{>\hspace{-4pt}\blacktriangleleft}
\def\dcc{\blacktriangleright\hspace{-4pt}\blacktriangleleft }
\def\0D{\Delta^{(0)}}
\def\1D{\Delta^{(1)}}
\newcommand{\Fg}{\mathfrak{g}}
\newcommand{\Fh}{\mathfrak{h}}
\newtheorem{theorem}{Theorem}[section]
\newtheorem{proposition}[theorem]{Proposition}
\newtheorem{lemma}[theorem]{Lemma}
\newtheorem{corollary}[theorem]{Corollary}
\def\build#1_#2^#3{\mathrel{
\mathop{\kern 0pt#1}\limits_{#2}^{#3}}}
\newcommand{\nsb}[1]{~\hspace{-4pt}_{^{[#1]}}}
\def\b{\beta}
\def\d{\delta}
\def\s{\sigma}
\def\nb{\nabla}
\def\ot{\otimes}
\def\part{\partial}
\def\ra{\rightarrow}
\def\lra{\leftrightarrow}
\def\text{\hbox}
\def\nb{\nabla}
\def\ot{\otimes}
\def\ra{\rightarrow}
\def\Ad{\mathop{\rm Ad}\nolimits}
\def\lra{\longrightarrow}
\def\build#1_#2^#3{\mathrel{
\mathop{\kern 0pt#1}\limits_{#2}^{#3}}}
\numberwithin{equation}{section}
\email{oesen@gtu.edu.tr}
\email{serkan.sutlu@isikun.edu.tr}
\begin{document}
\title{Hamiltonian Dynamics on Matched Pairs}
\author{O\u{g}ul Esen}
\address{Gebze Technical University, 41400, Gebze-Kocaeli, Turkey}
\author{Serkan S\"utl\"u}
\address{I\c{s}{\i}k University, 34980, \c{S}ile-\.Istanbul, Turkey}
\maketitle

\begin{abstract}
The cotangent bundle of a matched pair Lie group, and its trivialization, are shown to be a matched pair Lie group. The explicit matched pair decomposition on the trivialized bundle is presented. On the trivialized space, the canonical symplectic two-form and the canonical Poisson bracket are explicitly written. Various symplectic and Poisson reductions are perfomed. The Lie-Poisson bracket is derived. As an example, Lie-Poisson equations on $\mathfrak{sl}(2,\mathbb{C})^\ast$ are obtained.\\\\
\textbf{Key words}: Matched pairs, Hamilton's Equations, Reduction, Lie-Poisson Equations.\\ \textbf{MSC2010}: 70H05, 70H33, 37J15, 22E70.

\end{abstract}

\section{Introduction}

A matched pair Lie group $G\bowtie H$ is a Lie group by itself containing $G$
and $H$ as two non-intersecting Lie subgroups acting on each other subject to certain compatibility conditions \cite{LuWein90,Maji90,Maji90-II,Majid-book,Take81}. In our previous work \cite{essu2016LagMP}, we have studied the Lagrangian dynamics on the matched pair Lie algebra $\Fg\bowtie \Fh$, and the tangent bundle $T(G\bowtie H)$. In the present work, we investigate the geometry of the cotangent bundle $T^\ast(G\bowtie H)$, and present the symplectic structure, together with the Hamiltonian dynamics on it. Moreover, we consider various symplectic and
Poisson reductions, and  obtain the Lie-Poisson equations on the matched pair Lie coalgebra  $\Fg^\ast\dcc \Fh^\ast$.

The dynamics on matched pairs are interesting from many points of views. The existence of the mutual group actions leads to a nontrivial, and purely geometrical, way of coupling two dynamical equations of two interacting systems. Conversely, realizing the configuration space of a dynamical system as a matched pair, one can decompose the configuration space, as well as the dynamical equations, into simpler parts. Furthermore, if one of the mutual actions is trivial, then the matched pair Lie group reduces to a semi-direct product Lie group. In that sense,
the dynamics on matched pairs can be understood as a generalization of the well-studied semi-direct product theory, \cite{esen2015tulczyjew,marsden1998symplectic, marsden1991symplectic,marsden1984reduction,marsden1984semidirect,Rati80}.

The paper is organized in three sections.

The following section is devoted to the basic definitions related to the matched pairs, and the lifts of the group actions to
tangent and cotangent spaces. It is this section that we discuss the matched pair decomposition of the cotangent group $T^{\ast}(G\bowtie H)$, and we observe that it is the matched pair group of the semi-direct products $(\Fh^{\ast}\rtimes G)$ and $(\Fg^{\ast}\rtimes H)$.

In the third section, we study the symplectic structure on $T^\ast(G\bowtie H)$, and we derive the Hamilton's equations. In this section we also consider the
symplectic and Poisson reductions with respect to the symmetries given by $G$, $H$, and $G\bowtie H$. In particular, under the reflexivity conditions, a reduced Hamiltonian function(al) $\mathcal{H}=\mathcal{H}(\mu,\nu)$ on $\Fg^{\ast}\dcc\Fh^{\ast}$ generates, what we call, \emph{the matched Lie-Poisson equations}
\begin{equation}
\begin{cases}
\label{mLP1} \frac{d\mu}{dt} =ad^{\ast}_{\frac{\delta\mathcal{H}}{\delta\mu}%
}(\mu)+\frac{\delta\mathcal{H}}{\delta\nu} \overset{\ast }{\vartriangleright%
}\mu+ \mathfrak{a}_{\frac{\delta\mathcal{H}}{\delta\nu}}^{\ast}\nu, \\
\frac{d\nu}{dt} =ad^{\ast}_{ \frac{\delta\mathcal{H}}{\delta\nu}}(\nu)+%
\frac{\delta\mathcal{H}}{\delta\mu} \overset{\ast }{\vartriangleright}\nu+
\mathfrak{b}_{\frac{\delta\mathcal{H}}{\delta\mu}}^{\ast}\mu.%
\end{cases}%
\end{equation}
In the set \eqref{mLP1}, the first terms in the right hand sides come
from the individual Lie-Poisson equations for the dual spaces $\Fg^{\ast}$
and $\Fh^{\ast}$, respectively. The second and the third terms of \eqref{mLP1} on the right
however, are the results of the existence of the infinitesimal actions of $\Fg$ on $\Fh^\ast$,
and the actions of $\Fh$ on $\Fg^\ast$ induced from the group actions.
We also show in this section that if, in particular, one of these actions is trivial, then \eqref{mLP1} reduces to
the semidirect product Lie-Poisson dynamics. We conclude the section by the presentations of
the Legendre transformation, under the nondegeneracy conditions,
from the Lagrangian dynamics on $T(G\bowtie H)$ and $\Fg\bowtie\Fh$,
to the Hamiltonian dynamics on $T^\ast (G\bowtie H)$ and $\Fg^\ast \dcc \Fh^\ast$, respectively.

The forth, and the last, section is reserved to study the Lie-Poisson equations (\ref{mLP1}) on a concrete example. Just as we did in our previous work \cite{essu2016LagMP}, we consider  $SL(2,\mathbb{C})$ and its Iwasawa decomposition.

For the sake of completeness, and in order to widen the spectrum of the potential readers, we include five appendices. In the first one, we briefly summarize the symplectic and the Poisson reductions in a more abstract framework \cite{marsden1986reduction, marsden1974reduction,meyer1973symmetries}. The second appendix is for the Hamiltonian dynamics on Lie groups, whereas the third appendix recalls the Hamiltonian reduction from \cite{MarsdenRatiu-book}. On the fourth appendix we provide the basics on the Legendre transformation for a dynamical system whose configuration space is a Lie group.
In the last appendix, the definition of a Lie coalgebra is given \cite{Majid-book,Mich80}.

\section{Cotangent Bundle of a Matched Pair Group}

In this section we summarize the notion of a matched pair of Lie groups, and illustrate the group structure of the cotangent group of a matched pair Lie group.

Let us first fix our notation. Let $G$ and $H$ be two Lie groups, with the Lie algebras $\Fg$ and $\Fh$, respectively. We
shall denote the elements by
\begin{align*}
&g, g_{1}, g_{2} \in G ,\qquad h, h_{1}, h_{2} \in H ,\qquad \xi, \xi_{1},
\xi_{2} \in\Fg ,\qquad \eta, \eta_{1}, \eta_{2} \in\Fh,\qquad U_{g}\in T_{g}G
,\qquad V_{h}\in T_{h}H, \\
&\hspace{2cm}\mu, \hat{\mu},\mu_{1}, \mu_{2} \in\Fg^{\ast},\qquad \nu, \hat{\nu},\nu_{1},
\nu_{2} \in\Fh^{\ast} ,\qquad \alpha_{g}\in T_{g}^{\ast}G ,\qquad
\beta_{h}\in T_{h}^{\ast}H,
\end{align*}
where $TG$, $TH$ are the tangent bundles, and $T ^\ast G$, $T ^\ast H$ are the cotangent bundles of the groups $G$ and $H$, respectively.
The Lie group $G$ acts on its Lie algebra $\Fg$ from the left by the
adjoint action $\xi\mapsto\Ad_g\xi$, and hence the Lie algebra $\Fg$ acts on itself
from the left by the infinitesimal adjoint action $\xi_2\mapsto\ad_{\xi_1}\xi_2$. The (left) coadjoint action $\mu\mapsto\Ad^{\ast}_{g}\mu$ of $G$ on the dual space $\Fg^{\ast}$ is then given by
\begin{equation}  \label{Ad*}
\left\langle \Ad^{\ast}_{g}\mu,\xi \right\rangle= \left\langle \mu,\Ad%
_{g^{-1}}\xi \right\rangle.
\end{equation}
On the infinitesimal level, the (left) coadjoint action $\mu\mapsto\ad^{\ast}_\xi\mu$ of $\Fg$ on its dual $\Fg^{\ast}$ is given by
\begin{equation}  \label{ad*}
\left\langle \ad^{\ast}_{\xi_{1}}\mu,\xi_{2} \right\rangle=-\left\langle \mu,%
\ad_{\xi_{1}}{\xi_{2}} \right\rangle.
\end{equation}
%In this notation, we have that
%\begin{equation*}
%\frac{d}{dt}\bigg|_{t=0} \Ad^{\ast}_{g^{t}}\mu=ad^{\ast}_{\xi}\mu
%\end{equation*}
%when $g^{t}$ is a curve passing through the identity element of $G$ for $t=0$
%in the direction of $\xi$.

\subsection{Matched pair of Lie groups, Lie algebras and Lie coalgebras}
In this subsection we recall the basics on the matched pairs of Lie groups, Lie algebras and Lie coalgebras. We refer the reader to \cite{essu2016LagMP,LuWein90,Maji90-II,Maji90,Majid-book,Mich80,Take81,zhang2010double} for further details on the subject.

Let $(G,H)$ be a pair of Lie groups, such that $H$ acts on $G$ from the left,
and $G$ acts on $H$ from the right by
\begin{equation}
\rho :H\times G\rightarrow G,\qquad \left( h,g\right) \mapsto
h\vartriangleright g, \qquad \sigma :H\times G\rightarrow H,\qquad \left(
h,g\right) \mapsto h\vartriangleleft g.  \label{rho}
\end{equation}
The pair $(G,H)$ is called a matched pair of Lie groups if the mutual actions
\eqref{rho} satisfy the compatibility conditions
\begin{align*}
\begin{array}{rcl}
h\vartriangleright \left( g_{1}g_{2}\right) =\left( h\vartriangleright
g_{1}\right) \left( \left( h\vartriangleleft g_{1}\right) \vartriangleright
g_{2}\right), && (h_{1}h_{2})\vartriangleleft g=\left(
h_{1}\vartriangleleft \left( h_{2}\vartriangleright g\right) \right) \left(
h_{2}\vartriangleleft g\right), \\
h\vartriangleright e_G = e_G, && e_H\vartriangleleft g = e_H,
\end{array}
\end{align*}
where $e_G \in G$ and $e_H\in H$ are the identity elements. In this case, the Cartesian product $G\times H$ becomes a Lie group with the multiplication
\begin{equation*}
\left( g_{1},h_{1}\right) \left(
g_{2},h_{2}\right) =\left( g_{1}\left( h_{1}\vartriangleright
g_{2}\right) ,\left( h_{1}\vartriangleleft g_{2}\right) h_{2}\right) =\left(
g_{1}\rho \left( h_{1},g_{2}\right) ,\sigma \left( h_{1},g_{2}\right)
h_{2}\right),
\end{equation*}
and is denoted by $G\bowtie H$. In case one of the actions in \eqref{rho} is trivial, the matched pair group
$G\bowtie H$ reduces to a semi-direct product.

We next recall the matched pair of Lie algebras. Let $(\Fg,\Fh)$ be a pair of
Lie algebras equipped with the mutual actions we denote by
\begin{equation} \label{Lieact}
\vartriangleright:\Fh\ot\Fg\rightarrow \Fg,\quad \eta\ot \xi \mapsto \eta\vartriangleright \xi, \hspace{3cm} \vartriangleleft:\Fh\ot\Fg\rightarrow \Fh, \quad \eta\ot \xi \mapsto \eta\vartriangleleft \xi.
\end{equation}
Then the pair $(\Fg,\Fh)$ is called a matched pair of Lie algebras if the compatibilities
\begin{eqnarray*}
\eta \vartriangleright \lbrack \xi _{1},\xi _{2}]=[\eta \vartriangleright
\xi _{1},\xi _{2}]+[\xi _{1},\eta \vartriangleright \xi _{2}]+(\eta
\vartriangleleft \xi _{1})\vartriangleright \xi _{2}-(\eta \vartriangleleft
\xi _{2})\vartriangleright \xi _{1}, \\
\lbrack \eta _{1},\eta _{2}]\vartriangleleft\xi =[\eta _{1},\eta _{2}\vartriangleleft\xi ]+[\eta _{1}%
\vartriangleleft\xi ,\eta _{2}]+\eta _{1}\vartriangleleft (\eta _{2}\vartriangleright \xi
)-\eta _{2}\vartriangleleft (\eta _{1}\vartriangleright \xi ),
\end{eqnarray*}
are satisfied. In this case, the sum $\Fg \oplus \Fh$ is a Lie algebra with the Lie bracket
\begin{equation}
\lbrack (\xi _{1},\eta _{1}),\,(\xi _{2},\eta _{2})]=\left( [\xi _{1},\xi
_{2}]+\eta _{1}\vartriangleright \xi _{2}-\eta _{2}\vartriangleright \xi
_{1},\,[\eta _{1},\eta _{2}]+\eta _{1}\vartriangleleft \xi _{2}-\eta
_{2}\vartriangleleft \xi _{1}\right),  \label{mpla}
\end{equation}%
and is denoted by  $\mathfrak{g\bowtie h}$. We note that the Lie algebra of the matched pair group $G\bowtie H$ is the matched pair Lie algebra $\Fg\bowtie \Fh$, see for instance \cite{Maji90-II}.

We conclude the present subsection with a brief discussion on the matched pair of Lie coalgebras. We refer the reader to Appendix \ref{LCo} for details on Lie coalgebras, and their coactions. A pair $(\mathfrak{G},\mathfrak{H})$ of Lie coalgebras is called a matched pair of Lie coalgebras if $\mathfrak{G}$ is a left
$\mathfrak{H}$-comodule, and $\mathfrak{H}$ is a right $\mathfrak{G}$-comodule, such that
\begin{align*}
& \mu\nsb{-1} \ot \mu\nsb{0}\nsb{1} \ot \mu\nsb{0}\nsb{2} = \mu\nsb{1}%
\nsb{-1} \ot \mu\nsb{1}\nsb{0} \ot \mu\nsb{2} + \mu\nsb{2}\nsb{-1} \ot \mu%
\nsb{1} \ot \mu\nsb{2}\nsb{0} + \mu\nsb{-1}\nsb{0} \ot (\mu\nsb{-1}\nsb{1} %
\ot \mu\nsb{0} - \mu\nsb{0} \ot \mu\nsb{-1}\nsb{1}),  \notag \\
& \nu\nsb{0}\nsb{1} \ot \nu\nsb{0}\nsb{2} \ot \nu\nsb{1} =\nu\nsb{1} \ot \nu%
\nsb{2}\nsb{0} \ot \nu\nsb{2}\nsb{1} + \nu\nsb{1}\nsb{0} \ot \nu\nsb{2} \ot %
\nu\nsb{1}\nsb{1} + \nu\nsb{0} \ot \nu\nsb{1}\nsb{-1} \ot \nu\nsb{1}\nsb{0}
- \nu\nsb{1}\nsb{-1} \ot \nu\nsb{0} \ot \nu\nsb{1}\nsb{0},  \notag
\end{align*}
for any $\mu \in \mathfrak{G}$, and any $\nu\in \mathfrak{H}$, see for
instance \cite{zhang2010double}. In this case, $\mathfrak{G}\dcc\mathfrak{H}:=\mathfrak{%
G}\oplus\mathfrak{H}$ becomes a Lie coalgebra with the cobracket given by
\begin{align*}
\d_{\mathfrak{G}\dcc\mathfrak{H}}(\mu) = \mu\nsb{1}\ot \mu\nsb{2} + \mu\nsb{-1}
\ot \mu\nsb{0} - \mu\nsb{0}\ot \mu\nsb{1}, \qquad \d_{\mathfrak{G}\dcc%
\mathfrak{H}}(\nu) = \nu\nsb{1}\ot \nu\nsb{2} + \nu\nsb{0} \ot \nu\nsb{1} -
\nu\nsb{-1}\ot \nu\nsb{0}.
\end{align*}
Let $\Fg\bowtie\Fh$ be a matched pair Lie algebra. Then, dualizing the
mutual actions, $(\Fg\bowtie\Fh)^{\ast}$ becomes a Lie
coalgebra denoted by $\Fg^\ast\dcc\Fh^\ast$. Recalling (\ref{mpla}), the cobracket is given by
\begin{eqnarray} \label{Cob}
\Big\langle \d_{\Fg^\ast\dcc\Fh^\ast}(\mu,\nu), (\xi_1,\eta_1)\ot (\xi_2,\eta_2)\Big\rangle&=&\Big\langle
(\mu,\nu),[(\xi_1,\eta_1),(\xi_2,\eta_2)]\Big\rangle \nonumber \\
&=& \Big\langle \mu, [\xi _{1},\xi _{2}]+\eta _{1}\vartriangleright \xi
_{2}-\eta _{2}\vartriangleright \xi _{1}\Big\rangle+ \Big\langle \nu, [\eta
_{1},\eta _{2}]+\eta _{1}\vartriangleleft \xi _{2}-\eta _{2}\vartriangleleft
\xi _{1}\Big\rangle.
\end{eqnarray}
In the sequel we shall identify $(\Fg\bowtie\Fh)^{\ast}$ with $\Fg^\ast\dcc\Fh^\ast$. We therefore write, for the right trivialization of the cotangent bundle of a matched pair group, $T^{\ast}(G\bowtie H)\simeq (\Fg^\ast\dcc\Fh^\ast)\rtimes (G\bowtie H)$. As we shall see in the forthcoming section, the cobracket \eqref{Cob} determines the structure of the Lie-Poisson bracket, and hence the reduced dynamics, on $\Fg^\ast\dcc\Fh^\ast$.

\subsection{Tangent and cotangent lifts of the actions}

Freezing one of the arguments of the left action $\rho:H\times G\to G$ of \eqref{rho}, we arrive at two differentiable mappings
\begin{align} \label{sigma}
\rho_{g} & :H\rightarrow G,\quad h\mapsto h\vartriangleright g,\hspace{2.5cm}
\rho_{h}:G\rightarrow G,\quad g\mapsto h\vartriangleright g,
\end{align}
the tangent lifts of which are
\begin{equation} \label{tliftrho}
T_{h}\rho_{g}:T_{h}H\rightarrow T_{h\vartriangleright g}G,\hspace{2cm} T_{g}\rho
_{h}:T_{g}G\rightarrow T_{h\vartriangleright g}G,
\end{equation}
respectively. In particular, over the identity they become
\begin{align} \label{TeHrho}
T_{e_{H}}\rho_{g} & :\Fh\rightarrow T_{g}G\qquad\eta\mapsto
\eta\vartriangleright g, \qquad T_{e_{G}}\rho_{h} :\mathfrak{g}\rightarrow\mathfrak{g}\qquad\xi\mapsto
h\vartriangleright\xi.
\end{align}
Hence, the lifted action of the group $H$ on the tangent bundle $TG$, and the Lie algebra $\Fg$ are given by
\begin{eqnarray}
\hat{\rho}&:&H\times TG\rightarrow TG\qquad\left( h,U_{g}\right) \mapsto
h\vartriangleright U_{g}:=T_{g}\rho_{h}\left( U_{g}\right) \in
T_{h\vartriangleright g}G,  \label{rho-hat}
\\
\hat{\rho}&:&H\times\mathfrak{g}\rightarrow\mathfrak{g}\qquad\left(
h,\xi\right) \mapsto h\vartriangleright\xi:=T_{e}\rho_{h}\left( \xi\right) .
\label{rho-hat-algebra}
\end{eqnarray}
Differentiating (\ref{rho-hat}) with respect to the
group variable, we obtain the mappings
\begin{eqnarray}
T_{h}\hat{\rho}_{U_{g}}:T_{h}H\rightarrow T_{h\vartriangleright U_{g}}TG
\qquad V_{h}\mapsto T_{h}\hat{\rho}_{U_{g}}\left( V_{h}\right)
=:V_{h}\vartriangleright U_{g}.  \label{Trho-hat}
\end{eqnarray}
%where (\ref{Trho-tilda}) is the infinitesimal action of the group $TH$ on $TG $. Note that $T_{g}\tilde{\rho}_{V_{h}}\left( V_{g}\right) = T_{h}%
%\hat{\rho}_{V_{g}}\left( V_{h}\right)$. As a result, we use the same notation $V_{h}\vartriangleright U_{g}$ for both.
In particular, for $h=e_{H}$ and $g=e_{G}$, this mapping reduces to
\begin{equation}
\mathfrak{b}_\xi:=T_{e_{H}}%
\hat{\rho}_{\xi }:\Fh\rightarrow\mathfrak{g},\quad\eta\mapsto\eta
\vartriangleright\xi  \label{etaonxi}
\end{equation}
which enables us to define the left action $(\eta,\xi)\mapsto\eta
\vartriangleright\xi$ of $\Fh$ on $\Fg$ in \eqref{Lieact}. By freezing $\eta$ we arrive at the mapping
\begin{equation}  \label{etaonxi2-1}
\mathfrak{b}_\eta:\mathfrak{g}\rightarrow\mathfrak{g},\quad
\xi\mapsto\eta\vartriangleright\xi,
\end{equation}
which is the infinitesimal generator of the action \eqref{rho-hat-algebra}.

We next discuss the cotangent lifts briefly. The cotangent lift $T^\ast \rho_h$ gives the right action of the group $H$ on the cotangent bundle $T^\ast G$, and the linear algebraic dual $\Fg^\ast$ of $\Fg$ by
\begin{eqnarray}
\hat{\rho}^{\ast }&:&T^{\ast }G\times H\rightarrow T^{\ast }G\qquad \left(
\alpha _{g},h\right) \mapsto \alpha _{g}\overset{\ast }{\vartriangleleft }h:=%
{\hat{\rho}_{h}}^{\ast }(\alpha _{g})=T_{h^{-1}\vartriangleright g}^{\ast
}\rho _{h}\left( \alpha _{g}\right) \in T_{h^{-1}\vartriangleright g}^{\ast
}G,  \label{Trho-hat}
\\
\hat{\rho}^{\ast }&:&\mathfrak{g}^{\ast }\times H\rightarrow \mathfrak{g}%
^{\ast }\qquad \left( \mu ,h\right) \mapsto \mu \overset{\ast }{%
\vartriangleleft }h:=T_{e}^{\ast }\rho _{h}(\mu )
\end{eqnarray}%
where the dualization is
\begin{equation}\label{honmu}
\left\langle \alpha _{g}\overset{\ast }{\vartriangleleft }h, U_{h^{-1}\vartriangleright g}\right\rangle =\left\langle \alpha
_{g},h \vartriangleright U_{h^{-1}\vartriangleright g}\right\rangle.
\end{equation}
Finally, the dualizations of the mappings in (\ref{etaonxi}) and (\ref{etaonxi2-1}) are given by
\begin{equation}\label{b-dual}
\langle \mathfrak{b}_{\xi}^{\ast}\mu,\eta\rangle =\langle \mu, \mathfrak{b}%
_{\xi}\eta\rangle,\qquad \langle \mu\overset{\ast}{\vartriangleleft}\eta, \xi \rangle =\langle \mu,\eta\vartriangleright \xi \rangle,
\end{equation}
respectively. Note that, in these dualizations, we arrive at the infinitesimal right action of $\Fh$ on $\Fg^\ast$.

Similar arguments can be repeated for the right action $\sigma:H\times G \to H$. This time we start with the mappings
\begin{equation*}
\sigma_{h} :G\rightarrow H,\quad g\mapsto h\vartriangleleft g, \hspace{2.5cm}
\sigma_{g}:H\rightarrow H,\quad h\mapsto h\vartriangleleft g,
\end{equation*}
and their derivatives
\begin{eqnarray}
T_{g}\sigma _{h}&:&T_{g}G\rightarrow T_{h\vartriangleleft g}H,\hspace{2.5cm}
T_{h}\sigma _{g}:T_{h}H\rightarrow T_{h\vartriangleleft g}H,  \label{TeGsigma}
\\
\label{lifted-sigma}
T_{e_{G}}\sigma _{h}&:&\mathfrak{g}\rightarrow T_{h}H,\quad \xi \mapsto
h\vartriangleleft \xi ,\hspace{2cm}
T_{e_{H}}\sigma _{g}:\Fh\rightarrow \Fh,\quad \eta \mapsto
\eta \vartriangleleft g.
\end{eqnarray}
The right action of $G$ on the tangent bundle $TH$ and $\Fh$ are given by
\begin{align}
\hat{\sigma}& :TH\times G\rightarrow TH,\qquad \left( V_{h},g\right) \mapsto
V_{h}\vartriangleleft g:=T_{h}\sigma _{g}\left( V_{h}\right) \in
T_{h\vartriangleleft g}H,
\\
\hat{\sigma}& :\Fh\times G\rightarrow \Fh,\qquad \left(\eta,g\right) \mapsto
\eta\vartriangleleft g:=T_{e_H}\sigma _{g}\left( \eta\right) \in
\Fh. \label{Gonh}
\end{align}%
Freezing the vector entry, we arrive at $\hat{\sigma}_{V_{h}}:G\rightarrow TH$ whose derivative is
\begin{align*}
T_{g}\hat{\sigma}_{V_{h}}& :T_{g}G\rightarrow T_{V_{h}\vartriangleleft
g}TH\qquad U_{g}\mapsto V_{h}\vartriangleleft U_{g}.
\end{align*}
%Then, we obtain $T_{h}\tilde{\sigma}_{V_{g}}\left( V_{h}\right) =T_{g}\hat{%
%\sigma}_{V_{h}}\left( V_{g}\right) $, and employ the notation $%
%V_{h}\vartriangleleft U_{g}$ for both. Furthermore, in case
In particular, for $g=e_{G}$ and $h=e_{H}$, we obtain
\begin{equation}
\mathfrak{a}_{\eta
}:=T_{e_{G}}\hat{\sigma}_{\eta }:\mathfrak{g}\rightarrow \Fh,\quad
\xi \mapsto \eta \vartriangleleft \xi  \label{a}
\end{equation}
which yields the right action $(\eta,\xi)\mapsto\eta
\vartriangleleft\xi$ of $\Fg$ on $\Fh$ in (\ref{Lieact}).
By freezing $\xi$,
\begin{equation}  \label{etaonxi2}
\mathfrak{a}_\xi:\mathfrak{h}\rightarrow\mathfrak{g},\quad
\xi\mapsto\eta\vartriangleright\xi
\end{equation}
is the infinitesimal generator of the action \eqref{Gonh}.

%T_{e_{G}}\hat{\sigma}_{\eta }:\Fh\rightarrow \Fh,\quad
%\eta \mapsto \eta \vartriangleleft \xi ,\hspace{2cm}

As for the cotangent lifts, we have
\begin{eqnarray}
\hat{\sigma}^{\ast }&:&G\times T^{\ast }H\rightarrow T^{\ast }H,\hspace{2cm} \left(
g,\b _{h}\right) \mapsto g\overset{\ast }{\vartriangleright }\b
_{h}:=T_{h\vartriangleleft g^{-1}}^{\ast }\sigma _{g}\left( \b
_{h}\right) \in T_{h\vartriangleleft g^{-1}}^{\ast }H,\label{GonT*H}
\\ \label{gonh*}
\hat{\sigma}^{\ast }&:&G\times \Fh^{\ast }\rightarrow \Fh%
^{\ast },\hspace{2cm} \left( \nu ,g\right) \mapsto g\overset{\ast }{%
\vartriangleright }\nu :=T_{e_{H}}^{\ast }\sigma _{g}\left( \nu \right),
\end{eqnarray}
where
\begin{equation*}
\left\langle g\overset{\ast }{\vartriangleright }\b
_{h},V_{h\vartriangleleft g^{-1}}\right\rangle =\left\langle \b
_{h},V_{h\vartriangleleft g^{-1}}\vartriangleleft g\right\rangle.
\end{equation*}
In particular, we have the dual actions
\begin{eqnarray}\label{a-dual}
\left\langle \xi \overset{\ast }{\vartriangleright }\nu ,\eta
\right\rangle=\left\langle \nu ,\eta \vartriangleleft \xi \right\rangle,
\qquad \langle \mathfrak{a}_{\eta}^{\ast}\nu, \xi \rangle = \langle \nu ,
\mathfrak{a}_\eta(\xi) \rangle =\langle \nu , \eta \vartriangleleft \xi
\rangle,
\end{eqnarray}
where the mapping $\mathfrak{a}_\eta$ is in (\ref{a}).

We conclude with the (co)adjoint action of the matched pair group $G\bowtie H$ on the Lie algebra $\Fg\bowtie \Fh$, and its dual $(\Fg\bowtie \Fh)^\ast$. The tangent lifts of the left and right regular actions of $G\bowtie H$ are given by
\begin{align}
& T_{( g_{2},h_{2})}L_{( g_{1},h_{1})}( U_{g_{2}},V_{h_{2}}) = \left(
T_{h_{1}\vartriangleright g_{2}}L_{g_{1}}( h_{1}\vartriangleright U_{g_{2}})
,\,T_{h_{1}\vartriangleleft g_{2}}R_{h_{2}}\left( h_{1}\vartriangleleft
U_{g_{2}}\right) +T_{{h_{2}}}L_{( h_{1}\vartriangleleft g_{2})
}V_{h_{2}}\right),  \label{righttransl} \\
& T_{\left( g_{1},h_{1}\right) }R_{\left( g_{2},h_{2}\right) }\left(
U_{g_{1}},V_{h_{1}}\right) = \left( T_{g_{1}}R_{\left(
h_{1}\vartriangleright g_{2}\right) }U_{g_{1}}+T_{h_{1}\vartriangleright
g_{2}}L_{g_{1}}\left( V_{h_{1}}\vartriangleright g_{2}\right)
,T_{h_{1}\vartriangleleft g_{2}}R_{h_{2}}\left( V_{h_{1}}\vartriangleleft
g_{2}\right) \right),  \label{righttransl2}
\end{align}
where $(U_{g_{i}},V_{h_{i}})\in T_{(g_i,h_i)}(G \bowtie H)$ for $i=1,2$.
%The adjoint and coadjoint actions of the group $G\bowtie H$ to its Lie
%algebra and the dual space, respectively.
Following (\ref{Ad*}), for any $(g,h) \in G \bowtie H$, $(\xi,\eta)\in \mathfrak{g}%
\bowtie \Fh$, $(\mu,\nu)\in (\mathfrak{g}\bowtie \Fh%
)^{\ast}$, we have
\begin{eqnarray}
Ad_{(g,h)^{-1}}(\xi,\eta)=(h^{-1}\vartriangleright\zeta,T_{h^{-1}}R_{h}
(h^{-1}\vartriangleleft\zeta)+Ad_{h^{-1}}(\eta\vartriangleleft g)),
\label{Ad} \\
Ad_{(g,h)}^{\ast}(\mu,\nu)=(Ad^{\ast}_g\lambda, T^{\ast}_{e_H}\rho_g\circ
T^{\ast}L_{g^{-1}}\lambda+T^{\ast}_{e_H}\rho_g\circ Ad^{\ast}_h \nu)
\label{Ad-*}
\end{eqnarray}
where we use the abbreviations
\begin{equation*}
\zeta := Ad_{g^{-1}}\xi+T_gL_{g^{-1}}(\eta\vartriangleright g) \in \mathfrak{%
g}, \qquad \lambda=h\overset{\ast}{\vartriangleright}\mu+T^{\ast}_{e_G}%
\sigma_{h^{-1}}\circ T^{\ast}R_h\nu \in \mathfrak{%
g}^\ast.
\end{equation*}
%Here, $T^{\ast}_{e_H}\rho_g$ is the dual of the mapping $T_{e_H}\rho_g$ in (%
%\ref{TeHrho}), and $T^{\ast}_{e_G}\sigma_{h^{-1}}$ is the dual of $%
%T_{e_G}\sigma_{h^{-1}}$ in (\ref{TeGsigma}).
Accordingly, the infinitesimal adjoint
action of $\mathfrak{g}\bowtie \Fh$ on its dual space $(\mathfrak{g}%
\bowtie \Fh)^{\ast}$ is given by
\begin{equation}  \label{ad-*}
ad_{(\xi,\eta)}^{\ast}(\mu,\nu)= (ad^{\ast}_{\xi}(\mu)+\mu \overset{\ast }{%
\vartriangleleft}\eta+ \mathfrak{a}_{\eta}^{\ast}\nu,
ad^{\ast}_{\eta}(\nu)+\xi \overset{\ast }{\vartriangleright}\nu+ \mathfrak{b}%
_{\xi}^{\ast}\mu).
\end{equation}

\subsection{Cotangent bundle of a matched pair Lie group}

\label{CBMPLG}

The right trivialization
of the cotangent bundle of a Lie group yields
\begin{equation}  \label{iso}
T^\ast(G\bowtie H) \cong (\mathfrak{g}\bowtie \Fh)^\ast \rtimes
(G\bowtie H)= (\Fg^\ast\dcc \Fh^\ast)\rtimes
(G\bowtie H),
\end{equation}
by which we identify the group structure with that of the semi-direct product $(\Fg^\ast\dcc \Fh^\ast)\rtimes
(G\bowtie H)$. The group multiplication on the trivialized space is
\begin{eqnarray}  \label{GrT*GH}
\qquad (\mu_{1} ,\nu_{1} ,g_{1},h_{1})\cdot( \mu_{2} ,\nu_{2} ,g_{2},h_{2})=
((\mu_{1} ,\nu_{1})+Ad^{\ast}_{(g_{1},h_{1})} (\mu_{2}
,\nu_{2}),(g_{1}\left( h_{1}\vartriangleright g_{2}\right) ,\left(
h_{1}\vartriangleleft g_{2}\right) h_{2})),
\end{eqnarray}
where $Ad^{\ast}$ is the coadjoint action \eqref{Ad-*} of the matched pair group
$G\bowtie H$ on the dual space $\Fg^\ast\dcc \Fh^\ast$. A direct calculation shows that the
inclusions $T^\ast G \hookrightarrow T^\ast(G\bowtie H) \hookleftarrow
T^\ast H$ are not group homomorphisms. Instead, we have the following
decomposition.
\begin{lemma}
Let $G\bowtie H$ be a matched pair Lie group, and $(\Fg^\ast\dcc \Fh^\ast)\rtimes (G\bowtie H)$ be the trivialization of the matched pair cotangent group. Then the mappings
\begin{equation*}
\mathfrak{g}^\ast \rtimes H \hookrightarrow (\Fg^\ast\dcc \Fh^\ast) \rtimes (G\bowtie H) \hookleftarrow \Fh^\ast \rtimes G
\end{equation*}
given by
\begin{equation*}
(\mu,h) \hookrightarrow ((\mu,0),(e_G,h)), \qquad ((0,\nu),(g,e_H)) \hookleftarrow (\nu,g)
\end{equation*}
determine two subgroups.
\end{lemma}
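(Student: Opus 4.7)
The strategy is a direct verification using the trivialized group multiplication \eqref{GrT*GH}. The plan is to take two elements of each putative subgroup, multiply them using \eqref{GrT*GH} together with the coadjoint formula \eqref{Ad-*}, and check that the result remains of the same form; the identity is manifestly in each image, and closure under inverses follows by the same calculation.

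For the first inclusion $(\mu,h)\mapsto((\mu,0),(e_G,h))$, I first reduce the $G\bowtie H$-factor of the product: by the identity axioms $h\vartriangleright e_G = e_G$ and $h\vartriangleleft e_G = h$ (the latter forced by $(e_G,e_H)$ being a two-sided identity), the product $(e_G,h_1)(e_G,h_2)$ collapses to $(e_G,h_1h_2)$. For the dual factor I substitute $g=e_G$, $h=h_1$, $\mu=\mu_2$, $\nu=0$ into \eqref{Ad-*}. The crucial observation is that $\rho_{e_G}\colon H\to G$, $h\mapsto h\vartriangleright e_G = e_G$, is a constant map, so $T_{e_H}\rho_{e_G}=0$ and consequently $T^\ast_{e_H}\rho_{e_G}=0$. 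This kills the $\Fh^\ast$-slot of $Ad^\ast_{(e_G,h_1)}(\mu_2,0)$, leaving only $(h_1\overset{\ast}{\vartriangleright}\mu_2,\,0)$, so the product lands back in the image.

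The second inclusion is symmetric. For $(\nu,g)\mapsto((0,\nu),(g,e_H))$, the $G\bowtie H$-factor simplifies via $e_H\vartriangleright g = g$ and $e_H\vartriangleleft g = e_H$ to $(g_1g_2,e_H)$. For the dual factor, with $g=g_1$, $h=e_H$, $\mu=0$, $\nu=\nu_2$ in \eqref{Ad-*}, the auxiliary element $\lambda=T^\ast_{e_G}\sigma_{e_H}(\nu_2)$ vanishes because $\sigma_{e_H}\colon G\to H$, $g\mapsto e_H\vartriangleleft g=e_H$, is constant. Thus the $\Fg^\ast$-slot of $Ad^\ast_{(g_1,e_H)}(0,\nu_2)$ is zero, and the product again lies in the image.

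The identity $((0,0),(e_G,e_H))$ clearly sits in both images. For inverses, $(e_G,h)^{-1}=(e_G,h^{-1})$ in $G\bowtie H$ by the computation above, and by the same vanishing argument $Ad^\ast_{(e_G,h^{-1})}(\mu,0)$ has zero $\Fh^\ast$-component; this lets one write down the inverse in the expected form without effort, and dually for $(g,e_H)^{-1}$. The only real obstacle is reading \eqref{Ad-*} with the correct type matching; once one notices that the identity compatibilities of the matched pair force the tangent maps $T\rho_{e_G}$ and $T\sigma_{e_H}$ to vanish, every cross-term in the coadjoint formula drops out and the closure is automatic.
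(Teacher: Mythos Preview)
Your proof is correct and follows essentially the same approach as the paper: both verify closure under the multiplication \eqref{GrT*GH} by reducing the coadjoint formula \eqref{Ad-*} in the special cases $(g,h)=(e_G,h_1)$ and $(g,h)=(g_1,e_H)$, arriving at $Ad^\ast_{(e_G,h)}(\mu,0)=(h\overset{\ast}{\vartriangleright}\mu,0)$ and $Ad^\ast_{(g,e_H)}(0,\nu)=(0,g\overset{\ast}{\vartriangleright}\nu)$. Your explanation of \emph{why} the cross-terms vanish---namely that $\rho_{e_G}$ and $\sigma_{e_H}$ are constant maps, forcing their tangent (hence cotangent) maps to be zero---is more explicit than the paper, which simply asserts the reduced coadjoint formulas; and you additionally treat inverses, which the paper leaves implicit.
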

\begin{proof}
Let us first note that $\mathfrak{g}^\ast \rtimes H$ is the semi direct product group formed by the action of the group $H$ on the Lie algebra
$\mathfrak{g}$, and hence to $\mathfrak{g}^\ast$. Similarly the group structure $\Fh%
^\ast \rtimes G$ is determined by the action of the group $G$ on
$\Fh^\ast$. It follows from
\begin{equation*}
Ad^\ast_{(e_G,h)}(\mu,0) = (h \overset{\ast}{\vartriangleright} \mu,0), \qquad
Ad^\ast_{(g,e_H)}(0,\nu) = (0,g \overset{\ast}{\vartriangleright} \nu)
\end{equation*}
that these actions indeed coincide with the coadjoint action of the group $%
G\bowtie H$ on its Lie algebra $\mathfrak{g}\bowtie \Fh$. We
finally see that the images of the embeddings are indeed closed, under
multiplication, by
\begin{equation*}
((\mu_1,0),(e_G,h_1)) ((\mu_2,0),(e_G,h_2)) = ((\mu_1 + h \overset{\ast}{\vartriangleright}
\mu_2,0),(e_G,h_1h_2)),
\end{equation*}
and similarly by
\begin{equation*}
((0,\nu_1),(g_1,e_H)) ((0,\nu_2),(g_2,e_H)) = ((0,\nu_1 + g_1\overset{\ast}{\vartriangleright}
\nu_2),(g_1g_2,e_H)).
\end{equation*}
\end{proof}
\begin{lemma}
The mapping
\begin{equation}\label{multp}
(\mathfrak{g}^\ast \rtimes H) \times (\Fh^\ast \rtimes G) \lra (\Fg^\ast\dcc \Fh^\ast)\rtimes (G\bowtie H)
\end{equation}
given by the multiplication in $(\Fg^\ast\dcc \Fh^\ast)\rtimes (G\bowtie H)$ is an isomorphism.
\end{lemma}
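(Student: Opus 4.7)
The strategy is to exhibit the multiplication map as a bijection (and hence a diffeomorphism of smooth manifolds) by proving that every element of the target factors uniquely as a product of an embedded element of $\Fg^\ast \rtimes H$ with one of $\Fh^\ast \rtimes G$. Writing $A$ and $B$ for the two embedded subgroups furnished by the preceding lemma, the intersection $A\cap B$ is trivial: demanding $((\mu,0),(e_G,h))=((0,\nu),(g,e_H))$ forces $g=e_G$, $h=e_H$, $\mu=0$ and $\nu=0$ simultaneously. So the proof reduces to exhibiting, for each element of $K:=(\Fg^\ast\dcc\Fh^\ast)\rtimes(G\bowtie H)$, a unique preimage $((\mu',h'),(\nu',g'))$.

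Given $((\mu,\nu),(g,h))\in K$, I would split the construction of the preimage into two stages. First, using \eqref{GrT*GH}, the group component of the product reads
\begin{equation*}
(e_G,h')\cdot(g',e_H)=(h'\vartriangleright g',\,h'\vartriangleleft g'),
\end{equation*}
so I must solve $(h'\vartriangleright g',\,h'\vartriangleleft g')=(g,h)$ for $h'\in H$ and $g'\in G$. Here I invoke the standard matched-pair fact that the assignment $(h',g')\mapsto (h'\vartriangleright g',h'\vartriangleleft g')$ is a diffeomorphism $H\times G\to G\bowtie H$, equivalent to the statement that every element of $G\bowtie H$ admits a unique factorization of the form $h'g'$ alongside the defining $gh$ factorization. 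This yields unique smooth $h'=h'(g,h)$ and $g'=g'(g,h)$.

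Second, with $h'$ and $g'$ fixed, the Lie-coalgebra piece reduces to the linear equation
\begin{equation*}
(\mu',0)+Ad^{\ast}_{(e_G,h')}(0,\nu')=(\mu,\nu).
\end{equation*}
Specializing the coadjoint formula \eqref{Ad-*} to $g=e_G$ and $\mu=0$ collapses the cross-terms mixing $\Fg^\ast$ and $\Fh^\ast$, leaving the $\Fh^\ast$-component of $Ad^{\ast}_{(e_G,h')}(0,\nu')$ as (a trivialization of) the ordinary coadjoint action of $h'\in H$ on $\Fh^\ast$, which is a linear isomorphism. I can therefore recover $\nu'$ uniquely from $\nu$ and $h'$, and then obtain $\mu'$ as the difference of $\mu$ with the $\Fg^\ast$-projection of $Ad^{\ast}_{(e_G,h')}(0,\nu')$. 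Uniqueness at each step delivers injectivity, while solvability delivers surjectivity.

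The principal obstacle is the careful book-keeping needed to unpack \eqref{Ad-*} at $g=e_G$, $\mu=0$ and to identify the $\Fh^\ast$-projection of $Ad^{\ast}_{(e_G,h')}(0,\nu')$ explicitly as an invertible linear map of $\nu'$; this verification rests on the fact that the degenerate cotangent-lift terms $T^\ast_{e_H}\rho_{e_G}$ either vanish or collapse to the identity under the right trivialization, leaving the expected $\Ad^\ast_{h'}$-summand dominant. Once this point is settled, smoothness of the inverse is automatic from the smoothness of the matched-pair factorization and the linearity of the covector equations in $(\mu,\nu)$, upgrading the bijection to the claimed isomorphism.
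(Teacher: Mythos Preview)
Your proposal is correct and follows essentially the same approach as the paper: compute the product $((\mu,0),(e_G,h))\cdot((0,\nu),(g,e_H))$, solve the group component via the matched-pair factorization $H\times G\to G\bowtie H$, and then invert the covector equation using that the $\Fh^\ast$-component of $Ad^\ast_{(e_G,h)}(0,\nu)$ is $Ad^\ast_h\nu$. The paper simply makes both steps explicit, giving the formulas $g=(\widehat{h}^{-1}\rt\widehat{g}^{-1})^{-1}$, $h=(\widehat{h}^{-1}\lt\widehat{g}^{-1})^{-1}$ for the factorization and writing out the inverse map in closed form, whereas you argue structurally; the remark on $A\cap B$ being trivial is harmless but redundant once you establish uniqueness of the factorization directly.
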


\begin{proof}
Let us first note that
\begin{align*}
(\mu,h) \cdot (\nu,g)& = ((\mu,0),(e_G,h))((0,\nu),(g,e_H)) \\
& = ((\mu,0) + Ad^\ast_{(e_G,h)}(0,\nu),(h\rt g,h\lt g)) \\
& =((\mu + \nu \circ T_{h^{-1}}R_h \circ T_{e_G}\s_{h^{-1}},
Ad^\ast_h(\nu)),(h\rt g,h\lt g)).
\end{align*}
Next, given $((\widehat{\mu},\widehat{\nu}),(\widehat{g},\widehat{h})) \in (\Fg^\ast\dcc \Fh^\ast)\rtimes (G\bowtie H)$, we see that $h %
\rt g = \widehat{g}$ and $h\lt g = \widehat{h}$ implies $g = (\widehat{h}%
^{-1} \rt \widehat{g}^{-1})^{-1}$ and $h = (\widehat{h}^{-1} \lt \widehat{g}%
^{-1})^{-1}$, respectively. As a result,
\begin{align*}
& ((\widehat{\mu},\widehat{\nu}),(\widehat{g},\widehat{h})) \mapsto \\
& ((\widehat{\mu} - Ad^\ast_{\widehat{h}^{-1} \lt \widehat{g}^{-1}}(\widehat{%
\nu}) \circ T_{\widehat{h}^{-1} \lt \widehat{g}^{-1}}R_{(\widehat{h}^{-1} %
\lt \widehat{g}^{-1})^{-1}} \circ T_{e_G}\s_{\widehat{h}^{-1} \lt \widehat{g}%
^{-1}}, (\widehat{h}^{-1} \lt \widehat{g}^{-1})^{-1}),(Ad^\ast_{\widehat{h}%
^{-1} \lt \widehat{g}^{-1}}(\widehat{\nu}), (\widehat{h}^{-1} \rt \widehat{g}%
^{-1})^{-1}))  \notag
\end{align*}
is the inverse of the mapping \eqref{multp}.
\end{proof}

\begin{corollary} \label{alt}
Given a matched pair $(G,H)$ of Lie groups,
\begin{equation*}
(\Fg^\ast\dcc \Fh^\ast) \rtimes (G\bowtie H) \cong (\mathfrak{g}^\ast \rtimes H) \bowtie (\Fh^\ast \rtimes G).
\end{equation*}
Furthermore, the mutual actions are given by
\begin{eqnarray*}
&(\nu,g) \rt (\mu,h) = (Ad^\ast_g(\mu) - Ad^\ast_{h^{-1} \lt g^{-1}}(\nu + \mu\circ TL_{g^{-1}} \circ T\rho_g) \circ TR_{(h^{-1} \lt g^{-1})^{-1}} \circ T\s_{h^{-1} \lt g^{-1}},(h^{-1}\lt g^{-1})^{-1}), \\ &
(\nu,g) \lt (\mu,h) = (Ad^\ast_{h^{-1} \lt g^{-1}}(\nu + \mu\circ TL_{g^{-1}} \circ T\rho_g),(h^{-1}\rt g^{-1})^{-1}).
\end{eqnarray*}
Accordingly, the product on the trivialized cotangent group $(\mathfrak{g}^\ast \rtimes H) \bowtie (\Fh^\ast \rtimes G)$
is given by
\begin{align*}
((\mu_1,h_1),(\nu_1,g_1)) \cdot ((\mu_2,h_2),(\nu_2,g_2)) =
((\mu_1,h_1)((\nu_1,g_1) \rt (\mu_2,h_2)),((\nu_1,g_1) \lt %
(\mu_2,h_2))(\nu_2,g_2)).
\end{align*}
\end{corollary}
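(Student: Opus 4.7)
The plan is to invoke the classical matched-pair principle: if a Lie group $K$ contains two closed subgroups $A$ and $B$ whose product map $A\times B \to K$ is a diffeomorphism, then $K\cong A\bowtie B$, with the mutual actions read off from the unique re-factorization $b\cdot a = (b\rt a)(b\lt a)$, where $b\rt a\in A$ and $b\lt a\in B$. In the present setting, the first of the two preceding lemmas exhibits $A := \mathfrak{g}^\ast\rtimes H$ and $B := \Fh^\ast\rtimes G$ as closed subgroups of $K := (\Fg^\ast\dcc \Fh^\ast)\rtimes (G\bowtie H)$, while the second shows that the multiplication $A\times B \to K$ is a bijection. Since all the maps in sight are visibly smooth, the bijection is automatically a diffeomorphism, so the isomorphism $K\cong A\bowtie B$ follows at once, and it only remains to extract the actions and rewrite the group law.

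To compute the actions, I would evaluate $(\nu,g)\cdot(\mu,h)$ in $K$, with $(\nu,g)\in B$ and $(\mu,h)\in A$ embedded as in the first lemma. Using the trivialized product \eqref{GrT*GH} together with the coadjoint formula \eqref{Ad-*}, specialized at $h_1=e_H$ and $\nu_2=0$ so that $\lambda$ collapses to $\mu$ (the second term of $\lambda$ vanishing because $T^\ast\sigma_{e_H}=0$), a direct calculation gives
\[((0,\nu),(g,e_H))\cdot((\mu,0),(e_G,h)) = \bigl((Ad^\ast_g\mu,\; \nu + \mu\circ TL_{g^{-1}}\circ T\rho_g),\,(g,h)\bigr).\]
Plugging $\hat\mu := Ad^\ast_g\mu$, $\hat\nu := \nu + \mu\circ TL_{g^{-1}}\circ T\rho_g$, $\hat g := g$, $\hat h := h$ into the inverse map written out at the end of the proof of the second lemma re-factors this element into its $A$-part and its $B$-part. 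Reading off the first factor recovers the stated formula for $(\nu,g)\rt(\mu,h)$, and the second factor recovers $(\nu,g)\lt(\mu,h)$; the group coordinates $(h^{-1}\lt g^{-1})^{-1}\in H$ and $(h^{-1}\rt g^{-1})^{-1}\in G$ arise from solving $\hat h\rt\hat g = g$ and $\hat h\lt\hat g = h$ by matched-pair inversion in $G\bowtie H$.

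With these actions in hand, the product formula on $(\mathfrak{g}^\ast\rtimes H)\bowtie(\Fh^\ast\rtimes G)$ is nothing but the generic matched-pair multiplication $(a_1,b_1)(a_2,b_2) = (a_1(b_1\rt a_2),\,(b_1\lt a_2)b_2)$ evaluated with $a_i=(\mu_i,h_i)$ and $b_i=(\nu_i,g_i)$. The only real obstacle is bookkeeping: one has to keep the coadjoint twist $\mu\circ TL_{g^{-1}}\circ T\rho_g$ coherent with the composition $TR_{(h^{-1}\lt g^{-1})^{-1}}\circ T\sigma_{h^{-1}\lt g^{-1}}$ coming from the inverse map of the preceding lemma, and apply $Ad^\ast_{h^{-1}\lt g^{-1}}$ in the correct order. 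No genuinely new structural input is needed beyond the two lemmas just established.
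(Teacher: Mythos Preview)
Your proposal is correct and follows precisely the argument the paper intends: the corollary is stated without proof because it is an immediate consequence of the two preceding lemmas via the standard matched-pair factorization principle, and you have spelled out exactly that reasoning, including the refactorization of $(\nu,g)\cdot(\mu,h)$ via the explicit inverse map from the second lemma. The computation of $Ad^\ast_{(g,e_H)}(\mu,0)$ and the subsequent identification of the $A$- and $B$-components match the stated formulas for the mutual actions, so there is nothing to add.
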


\section{Hamiltonian Dynamics on Matched Pairs}

\subsection{Symplectic structure and Hamilton's equations}

Let us first note that the Lie algebra of the right trivialized cotangent bundle $(\Fg^\ast\dcc\Fh^\ast)\rtimes (G\bowtie H)$ is the quadrable $(\Fg^\ast\dcc\Fh^\ast)\rtimes \left( \Fg\bowtie\Fh\right)$. The choice right trivialization is motivated by the goal of expressing the right invariant vector fields (those generating the left action) in a concise form. Following Appendix \ref{reduc-Hamilton-dynam}, a right invariant vector field $X$ on $(\Fg^\ast\dcc\Fh^\ast)\rtimes (G\bowtie H)$ is generated by an
element $\left(\hat{\mu},\hat{\nu},\xi,\eta\right) \in (\Fg%
^\ast\dcc\Fh^\ast)\rtimes \left( \Fg\bowtie\Fh\right) $ as
\begin{align}  \label{RIVF}
& X_{\left(\hat{\mu},\hat{\nu},\xi,\eta\right) }\left(\mu ,\nu, g,h\right)
=\left((\hat{\mu},\hat{\nu})+ad_{(\xi,\eta)}^{\ast }(\mu ,\nu),TR_{(g,h)}{%
(\xi,\eta)}\right),
\end{align}
where $ad^{\ast }$ is the infinitesimal coadjoint action in (\ref{ad-*}), and $TR$ the tangent lift of the right translation on
$G\bowtie H$ given by \eqref{righttransl2}. In view of (\ref{1formT*G}) and (\ref{2formT*G}), the values of the pull-backs of
the canonical one-form $\theta$, and hence the symplectic two-form $\Omega$ on $(\Fg^\ast\dcc \Fh^\ast)\rtimes (G\bowtie H)$, on the right invariant
vector fields are
\begin{align}
\theta(X_{(\hat{\mu},\hat{\nu},\xi,\eta)})\left(\mu ,\nu,
g,h\right)&=\left\langle \mu,\xi\right\rangle +\left\langle \nu
,\eta\right\rangle, \\
\Omega(X_{(\hat{\mu_1},\hat{\nu_1},\xi_1,\eta_1)},X_{(\hat{\mu_2},\hat{\nu_2}%
,\xi_2,\eta_2)})\left(\mu ,\nu, g,h\right)&= \left\langle (\hat{\mu_2},\hat{%
\nu_2}),(\xi_1,\eta_1)\right\rangle \label{SymT*GH}\\
&-\left\langle (\hat{\mu_1},\hat{\nu_1}),(\xi_2,\eta_2)\right\rangle
+\left\langle (\mu ,\nu) ,[(\xi_1,\eta_1),(\xi_2,\eta_2)]\right\rangle, \nonumber
\end{align}
where the bracket on the right hand side is the one in (\ref{mpla}).

\begin{proposition}
For a Hamiltonian functional $\mathcal{H}=\mathcal{H}(\mu,\nu,g,h)$ on $(\Fg^\ast\dcc \Fh^\ast)\rtimes (G\bowtie H)$, the Hamilton's equations are
\begin{equation}
\begin{cases}
\label{MPHamEq}
\displaystyle
\frac{dg}{dt} =TR_{g}\left(\frac{\delta\mathcal{H}}{\delta\mu}\right)+\frac{\delta\mathcal{H}}{\delta\nu}\vartriangleright g, \\
\displaystyle \frac{dh}{dt} =TR_{h}\left(\frac{\delta\mathcal{H}}{\delta\nu}\vartriangleleft g\right), \\
\displaystyle\frac{d\mu}{dt} =-T_{e_G}^{\ast}R_g\left(\frac{\delta\mathcal{H}}{\delta g}\right) + ad^{\ast}_{\frac{\delta\mathcal{H}}{\delta\mu}}(\mu) +
\mu\overset{\ast }{\vartriangleleft}\frac{\delta\mathcal{H}}{\delta\nu} + \mathfrak{a}_{\frac{\delta\mathcal{H}}{\delta\nu}}^{\ast}\nu, \\
\displaystyle \frac{d\nu}{dt} =-T^{\ast}_{e_H}\rho_g\left(\frac{\delta\mathcal{H}}{\delta g}\right) -
g\overset{\ast}{\vartriangleright}T_{e_H}^{\ast}R_h\left(\frac{\delta\mathcal{H}}{\delta h}\right) + ad^{\ast}_{\frac{\delta\mathcal{H}}{\delta\nu}}(\nu) +\frac{\delta\mathcal{H}}{\delta\mu} \overset{\ast }{\vartriangleright}\nu + \mathfrak{b}_{\frac{\delta\mathcal{H}}{\delta\mu}}^{\ast}\mu.
\end{cases}
\end{equation}
Here $\vartriangleright$  denotes the infinitesimal left action \eqref{TeHrho} of $\Fh$ on $G$, $\vartriangleleft$ is the lifted right action \eqref{lifted-sigma} of $G$ on $\Fh$. The action $\overset{\ast}{\vartriangleright}$ stands both for the dual left action \eqref{gonh*} of $G$ on $\Fh^\ast$, and \eqref{a-dual} of $\Fg$ on $\Fh^\ast$, whereas $\overset{\ast}{\vartriangleleft}$ is for the right action $\Fh$ on $\Fg^\ast$ given by \eqref{b-dual}. Finally, the dual mappings $\mathfrak{a}^\ast$ and $\mathfrak{b}^\ast$ are given by \eqref{a-dual} and \eqref{b-dual} respectively.
\end{proposition}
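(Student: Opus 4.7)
The plan is to apply the general scheme of Appendix \ref{reduc-Hamilton-dynam} for Hamilton's equations on a right-trivialized Lie group to the quadrable $(\Fg^\ast\dcc\Fh^\ast)\rtimes(\Fg\bowtie\Fh)$. The Hamiltonian vector field $X_\mathcal{H}$ is characterized by $\iota_{X_\mathcal{H}}\Omega = d\mathcal{H}$. I would write $X_\mathcal{H}$ in the right-invariant form \eqref{RIVF} as $X_{(\hat\mu_\mathcal{H},\hat\nu_\mathcal{H},\xi_\mathcal{H},\eta_\mathcal{H})}$ with unknown generator in $(\Fg^\ast\dcc\Fh^\ast)\rtimes(\Fg\bowtie\Fh)$, and solve for it by matching both sides against an arbitrary test field $X_{(\hat\mu,\hat\nu,\xi,\eta)}$. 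The integral curves of $X_\mathcal{H}$ then yield \eqref{MPHamEq} component by component.

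For the left hand side $\Omega(X_\mathcal{H},X_{(\hat\mu,\hat\nu,\xi,\eta)})$ I would substitute into \eqref{SymT*GH} and expand $\langle(\mu,\nu),[(\xi_\mathcal{H},\eta_\mathcal{H}),(\xi,\eta)]\rangle$ using the matched pair bracket \eqref{mpla}. Transferring the actions to the dual via \eqref{ad*}, \eqref{b-dual} and \eqref{a-dual} produces exactly the combinations appearing in \eqref{ad-*}: the pairing with $\xi$ reproduces $\ad^\ast_{\xi_\mathcal{H}}\mu+\mu\overset{\ast}{\vartriangleleft}\eta_\mathcal{H}+\mathfrak{a}^\ast_{\eta_\mathcal{H}}\nu$, while the pairing with $\eta$ reproduces $\ad^\ast_{\eta_\mathcal{H}}\nu+\xi_\mathcal{H}\overset{\ast}{\vartriangleright}\nu+\mathfrak{b}^\ast_{\xi_\mathcal{H}}\mu$.

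For the right hand side I would decompose the right-invariant field into its fiber part $(\hat\mu,\hat\nu)+\ad^\ast_{(\xi,\eta)}(\mu,\nu)$ and its base part $TR_{(g,h)}(\xi,\eta)=(TR_g\xi+\eta\vartriangleright g,\,TR_h(\eta\vartriangleleft g))$, the latter obtained from \eqref{righttransl2} specialized at the identity of $G\bowtie H$. Pairing against the functional derivatives $\tfrac{\delta\mathcal{H}}{\delta\mu},\tfrac{\delta\mathcal{H}}{\delta\nu},\tfrac{\delta\mathcal{H}}{\delta g},\tfrac{\delta\mathcal{H}}{\delta h}$, and dualizing the tangent lifts $TR_g$, $T\rho_g$, $TR_h$, $T\sigma_g$ by means of \eqref{honmu} and the cotangent lift \eqref{gonh*}, rewrites $d\mathcal{H}(X_{(\hat\mu,\hat\nu,\xi,\eta)})$ as an expression linear in $(\hat\mu,\hat\nu,\xi,\eta)$. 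Equating the coefficients of $\hat\mu$ and $\hat\nu$ against the left hand side forces $\xi_\mathcal{H}=\tfrac{\delta\mathcal{H}}{\delta\mu}$ and $\eta_\mathcal{H}=\tfrac{\delta\mathcal{H}}{\delta\nu}$, which fed into the base component of \eqref{RIVF} produces the first two lines of \eqref{MPHamEq}. Equating the coefficients of $\xi$ and $\eta$ fixes $(\hat\mu_\mathcal{H},\hat\nu_\mathcal{H})$, and plugging this into the fiber component $(\hat\mu_\mathcal{H},\hat\nu_\mathcal{H})+\ad^\ast_{(\xi_\mathcal{H},\eta_\mathcal{H})}(\mu,\nu)$ of \eqref{RIVF} produces the last two lines of \eqref{MPHamEq}.

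The hard part is the bookkeeping of the cross terms that the matched pair action injects into $TR_{(g,h)}(\xi,\eta)$. Indeed, the contribution $\eta\vartriangleright g$ in $\dot g$ forces a contribution of $\tfrac{\delta\mathcal{H}}{\delta g}$ into the $\dot\nu$ equation through $T^\ast_{e_H}\rho_g$, while $\eta\vartriangleleft g$ in $\dot h$ together with \eqref{gonh*} produces the cross term $g\overset{\ast}{\vartriangleright}T^\ast_{e_H}R_h\tfrac{\delta\mathcal{H}}{\delta h}$ in the same equation. Once these cross contributions are correctly identified and matched with the $\overset{\ast}{\vartriangleright}, \overset{\ast}{\vartriangleleft}, \mathfrak{a}^\ast, \mathfrak{b}^\ast$ terms coming from the bracket expansion on the symplectic side, the system \eqref{MPHamEq} falls out directly.
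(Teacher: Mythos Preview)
Your proposal is correct and follows essentially the same route as the paper. The only difference is one of packaging: the paper invokes the general Hamilton's equations \eqref{HameqonT*G} on the right-trivialized cotangent bundle of an abstract Lie group, treats $G\bowtie H$ as a single group, and then only computes the matched-pair-specific ingredients $TR_{(g,h)}(\xi,\eta)$ from \eqref{righttransl2}, its dual $T^{\ast}_{(e_G,e_H)}R_{(g,h)}$, and $ad^{\ast}_{(\xi,\eta)}(\mu,\nu)$ from \eqref{ad-*}; you instead go back to $\iota_{X_{\mathcal{H}}}\Omega=d\mathcal{H}$ with $\Omega$ given by \eqref{SymT*GH} and redo the coefficient-matching that produced \eqref{HameqonT*G} in the first place. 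The bookkeeping you describe for the cross terms coming from $\eta\vartriangleright g$ and $\eta\vartriangleleft g$ is exactly the content of the paper's dualization of \eqref{righttransl2}, so the two arguments coincide once unpacked.
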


\begin{proof}
Recall that for a Hamiltonian functional $\mathcal{H}$, the Hamilton's equations are implicitly given by
\begin{equation}  \label{HameqonT*GH}
\frac{d}{dt}(\mu,\nu)=-T^{\ast}R_{(g,h)}(\frac{\delta \mathcal{H}}{\delta
(g,h)})+ad^{\ast}_{\frac{\delta \mathcal{H}}{\delta (\mu,\nu)}}(\mu,\nu),
\qquad \frac{d}{dt}(g,h)=TR_{(g,h)}(\frac{\delta \mathcal{H}}{\delta
(\mu,\nu)}),
\end{equation}
where $T^{\ast}R$ is the cotangent lift of the right translation on $G\bowtie H$. Dualizing \eqref{righttransl2}, it is given by
\begin{equation}
T_{(e_G,e_H)}^{\ast}R_{(g,h)}(\alpha_g,\beta_h)=\Big(T_{e_G}^{\ast}R_g(%
\alpha_g),T^{\ast}_{e_H}\rho_g(\alpha_g) +g\overset{\ast}{\vartriangleright}%
T_{e_H}^{\ast}R_h(\beta_h)\Big).
\end{equation}
The claim now follows from a direct computation.
\end{proof}

\begin{proposition}
The canonical Poisson bracket corresponding to the symplectic structure \eqref{SymT*GH} is

\begin{align}  \label{PoissononT*GH}
\left\{ \mathcal{H},\mathcal{F}\right\} _{T^{\ast}(G\bowtie
H)}(\mu,\nu,g,h) =\underbrace{\left\langle T^{\ast}R_{g}\left(\frac{\delta
\mathcal{H}}{\delta g}\right),\frac{\delta \mathcal{F}}{\delta \mu}\right\rangle
-\left\langle T^{\ast}R_{g}\left(\frac{\delta \mathcal{F}}{\delta g}\right),\frac{%
\delta \mathcal{H}}{\delta \mu}\right\rangle +\left\langle \mu ,\left[\frac{%
\delta \mathcal{H}}{\delta \mu},\frac{\delta \mathcal{F}}{\delta \mu}%
\right]\right\rangle}_{K:\text{ Canonical Poisson bracket on $\Fg^{\ast}\rtimes G$}%
} \\
\underbrace{+\left\langle T^{\ast}\rho_{g}\left(\frac{\delta \mathcal{H}}{\delta
g}\right),\frac{\delta \mathcal{F}}{\delta \nu}\right\rangle -\left\langle
T^{\ast}\rho_{g}\left(\frac{\delta \mathcal{F}}{\delta g}\right),\frac{\delta
\mathcal{H}}{\delta \nu}\right\rangle +\left\langle \nu ,\left[\frac{\delta
\mathcal{H}}{\delta \nu},\frac{\delta \mathcal{F}}{\delta \nu}%
\right]\right\rangle}_{L:\text{ Poisson bracket on $\Fh^{\ast}\rtimes G$}}  \notag
\\
\underbrace{ +\left\langle g\overset{\ast}{\vartriangleright}%
T_{e_H}^{\ast}R_h\left(\frac{\delta \mathcal{H}}{\delta h}\right) ,\frac{\delta
\mathcal{F}}{\delta \nu} \right\rangle -\left\langle g\overset{\ast}{%
\vartriangleright}T_{e_H}^{\ast}R_h\left(\frac{\delta \mathcal{F}}{\delta h}\right),%
\frac{\delta \mathcal{H}}{\delta \nu}\right\rangle}_{M_1:\text{ Terms by
the action of $G$ on $T^{\ast}H$}} \underbrace{ -\left\langle \frac{%
\delta \mathcal{H}}{\delta \mu}\overset{\ast}{\vartriangleright}\nu ,\frac{%
\delta \mathcal{F}}{\delta \nu} \right\rangle -\left\langle \frac{\delta
\mathcal{F}}{\delta \mu}\overset{\ast}{\vartriangleright} \nu,\frac{\delta
\mathcal{H}}{\delta \nu}\right\rangle}_{M_2:\text{ Terms by the action
of $\Fg$ on $\Fh^\ast$}}  \notag \\
\underbrace{+ \left\langle \mu ,\frac{\delta \mathcal{H}}{\delta \nu}%
\vartriangleright \frac{\delta \mathcal{F}}{\delta \mu}\right\rangle
-\left\langle \mu ,\frac{\delta \mathcal{F}}{\delta \nu}\vartriangleright
\frac{\delta \mathcal{H}}{\delta \mu}\right\rangle }_{N:\text{ Terms by
the action of $\Fh$ on $\Fg^\ast$}}.  \notag
\end{align}
\end{proposition}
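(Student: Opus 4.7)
The strategy is to derive \eqref{PoissononT*GH} directly from Hamilton's equations \eqref{MPHamEq}, using the characterization $\{\mathcal{H},\mathcal{F}\}=X_{\mathcal{H}}(\mathcal{F})=d\mathcal{F}/dt$ of the canonical Poisson bracket, where the time derivative is taken along the Hamiltonian flow of $\mathcal{H}$. By the chain rule,
\begin{equation*}
\frac{d\mathcal{F}}{dt}=\left\langle\frac{\delta\mathcal{F}}{\delta\mu},\frac{d\mu}{dt}\right\rangle+\left\langle\frac{\delta\mathcal{F}}{\delta\nu},\frac{d\nu}{dt}\right\rangle+\left\langle\frac{\delta\mathcal{F}}{\delta g},\frac{dg}{dt}\right\rangle+\left\langle\frac{\delta\mathcal{F}}{\delta h},\frac{dh}{dt}\right\rangle,
\end{equation*}
so substituting the four velocities from \eqref{MPHamEq} yields a sum of roughly a dozen pairings that must then be identified with the five blocks $K,L,M_1,M_2,N$ of \eqref{PoissononT*GH}.

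First I would dispatch the position--momentum pieces. The contributions of $-T^{\ast}_{e_G}R_g(\delta\mathcal{H}/\delta g)$ from $d\mu/dt$ and $TR_g(\delta\mathcal{H}/\delta\mu)$ from $dg/dt$, together with the Lie-algebraic term $ad^{\ast}_{\delta\mathcal{H}/\delta\mu}(\mu)$, become, via the $R_g$-adjunction and \eqref{ad*}, the block $K$; the analogous triple for $\nu$, together with the $\rho_g$-adjunction applied to $-T^{\ast}_{e_H}\rho_g(\delta\mathcal{H}/\delta g)$ in $d\nu/dt$ and its companion $(\delta\mathcal{H}/\delta\nu)\vartriangleright g$ in $dg/dt$, yields $L$. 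The remaining position--momentum pair, namely $-g\overset{\ast}{\vartriangleright}T^{\ast}_{e_H}R_h(\delta\mathcal{H}/\delta h)$ from $d\nu/dt$ paired with $\delta\mathcal{F}/\delta\nu$, and $TR_h(\delta\mathcal{H}/\delta\nu\vartriangleleft g)$ from $dh/dt$ paired with $\delta\mathcal{F}/\delta h$, produces $M_1$ after invoking the adjunction \eqref{honmu} between $\hat\sigma$ and $\hat\sigma^{\ast}$.

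Next I would collect the infinitesimal-action pieces. The term $\mu\overset{\ast}{\vartriangleleft}(\delta\mathcal{H}/\delta\nu)$ in $d\mu/dt$, paired with $\delta\mathcal{F}/\delta\mu$, unfolds by \eqref{b-dual} to $\langle\mu,(\delta\mathcal{H}/\delta\nu)\vartriangleright(\delta\mathcal{F}/\delta\mu)\rangle$, and combines with its partner coming from the analogous term in $d\nu/dt$ to produce the block $N$. The remaining terms $\mathfrak{a}^{\ast}_{\delta\mathcal{H}/\delta\nu}\nu$ and $\mathfrak{b}^{\ast}_{\delta\mathcal{H}/\delta\mu}\mu$, paired against $\delta\mathcal{F}/\delta\mu$ and $\delta\mathcal{F}/\delta\nu$ respectively and transformed via \eqref{a-dual} and \eqref{b-dual}, assemble into $M_2$.

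The principal obstacle is organizational rather than conceptual: the dozen raw pairings must each be transported to the correct block of \eqref{PoissononT*GH}, tracking signs carefully through the duality relations \eqref{a-dual}, \eqref{b-dual}, \eqref{honmu}, and the Lie bracket \eqref{mpla}. A built-in consistency check is that the target formula is block-by-block skew in $(\mathcal{H},\mathcal{F})$, so any stray asymmetric residue would flag a sign error in the substitution; once all terms are matched, the proposition follows at once.
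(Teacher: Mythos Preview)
Your approach is valid but differs from the paper's. The paper offers no detailed proof at all: it simply remarks that \eqref{PoissononT*GH} follows by substituting the matched-pair data (the cotangent lift $T^{\ast}R_{(g,h)}$ dualized from \eqref{righttransl2}, and the bracket \eqref{mpla}) directly into the general Poisson bracket formula \eqref{PoissononT*G} on the trivialized cotangent bundle of a Lie group. Your route through Hamilton's equations \eqref{MPHamEq} is a legitimate alternative and arguably more transparent, since it makes the origin of each block visible; the paper's route is shorter but treats the whole expression as a single specialization.

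Two small points to watch. First, in the paper's sign convention one has $X_{\mathcal{H}}(\mathcal{F})=-\{\mathcal{H},\mathcal{F}\}$, as you can verify by comparing \eqref{HameqonT*G} with \eqref{PoissononT*G} using \eqref{ad*}; so your identification $\{\mathcal{H},\mathcal{F}\}=d\mathcal{F}/dt$ carries a global sign that must be tracked. Second, your assignment of the cross-terms to $M_2$ versus $N$ is slightly scrambled: the term $\mathfrak{b}^{\ast}_{\delta\mathcal{H}/\delta\mu}\mu$ from $d\nu/dt$, paired with $\delta\mathcal{F}/\delta\nu$, unwinds via \eqref{b-dual} to $\langle\mu,(\delta\mathcal{F}/\delta\nu)\vartriangleright(\delta\mathcal{H}/\delta\mu)\rangle$, which belongs to $N$, not $M_2$; conversely the ``analogous'' partner $(\delta\mathcal{H}/\delta\mu)\overset{\ast}{\vartriangleright}\nu$ from $d\nu/dt$ feeds $M_2$, not $N$. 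All the ingredients are present, just cross-attributed, so the final count still works out.
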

The proof of this can be deduced directly by the proper substitutions in (\ref{PoissononT*G}). We classify all terms on the right hand side of (\ref{PoissononT*GH}) with respect to the actions. The terms
labeled by $K$ are those obtained by the canonical Poisson bracket on $\Fg^{\ast}\ltimes
G$, see for instance (\ref{PoissononT*G}). Those labeled by $M_1$ and $M_2$
are the results of the action (\ref{GonT*H}) of $G$ on $T^{\ast}H$ and its
infinitesimal generator, respectively. Finally, the terms labeled by $N$ 
depend on the action (\ref{TeHrho}) of $\Fh$ on $\Fg$. Similarly, the ones labeled by $L$ are obtained from the
Poisson bracket on $\Fh^{\ast}\ltimes G$ which is, in view of the matched pair decomposition $(\mathfrak{g}^\ast \rtimes H)
\bowtie (\Fh^\ast \rtimes G)$ of the total space $(\Fg^\ast\dcc\Fh%
^\ast)\rtimes (G\bowtie H)$,
\begin{eqnarray}  \label{Poissononh*G}
\left\{ \mathcal{H},\mathcal{F}\right\} _{\Fh^{\ast}\rtimes G}(\nu,g)
=\left\langle T^{\ast}\rho_{g}(\frac{\delta \mathcal{H}}{\delta g}),\frac{%
\delta \mathcal{F}}{\delta \nu}\right\rangle -\left\langle T^{\ast}\rho_{g}(%
\frac{\delta \mathcal{F}}{\delta g}),\frac{\delta \mathcal{H}}{\delta \nu}%
\right\rangle +\left\langle \nu ,[\frac{\delta \mathcal{H}}{\delta \nu},%
\frac{\delta \mathcal{F}}{\delta \nu}]\right\rangle
\end{eqnarray}
for two functions $\mathcal{H}=\mathcal{H}(\nu,g)$ and $\mathcal{F}=\mathcal{F}(\nu,g)$. Comparing (\ref%
{Poissononh*G}) with the canonical Poisson bracket (\ref{PoissononT*G}) on $%
\Fg^{\ast}\rtimes G$, we observe that the roles of dual spaces $\Fh^{\ast}
$ and $ \Fg^{\ast}$, and the cotangent lifts $T^{\ast}\rho_g%
$ and $  T^{\ast}R_g$ are interchanged. Note that, the Poisson
bracket (\ref{Poissononh*G}) is non-degenerate if the action $\rho$ is free
and transitive.

\subsection{Lie-Poisson reduction}
It follows from the discussion in Appendix \ref{reduc-Hamilton-dynam} that the action
of $G \bowtie H$ on the trivialized space $(\Fg^\ast\dcc\Fh^\ast)\rtimes
(G\bowtie H)$ is symplectic.

\begin{proposition}
The Lie-Poisson bracket of two functions $\mathcal{H}= \mathcal{H}(\mu,\nu)$ and $\mathcal{F}=\mathcal{F}(\mu,\nu)$ on $\Fg^\ast\dcc\Fh^\ast$ is given by
\begin{align}  \label{LiePoissonongh}
& \left\{ \mathcal{H},\mathcal{F}\right\} _{\Fg^\ast\dcc\Fh%
^\ast}(\mu,\nu) =\underbrace{\left\langle \mu ,\left[\frac{\delta \mathcal{H}}{%
\delta \mu},\frac{\delta \mathcal{F}}{\delta \mu}\right]\right\rangle
+\left\langle \nu ,\left[\frac{\delta \mathcal{H}}{\delta \nu},\frac{\delta
\mathcal{F}}{\delta \nu}\right]\right\rangle}_{\text{A: direct product}} +\\
& \hspace{2cm} \underbrace{\left\langle \mu ,\frac{\delta \mathcal{H}}{\delta \nu}%
\vartriangleright \frac{\delta \mathcal{F}}{\delta \mu}\right\rangle
-\left\langle \mu ,\frac{\delta \mathcal{F}}{\delta \nu}\vartriangleright
\frac{\delta \mathcal{H}}{\delta \mu}\right\rangle}_ {\text{B: via the
action $\rho$}} +\underbrace{ \left\langle \nu ,\frac{\delta \mathcal{H}}{\delta \nu}%
\vartriangleleft \frac{\delta \mathcal{F}}{\delta \mu}\right\rangle
-\left\langle \nu ,\frac{\delta \mathcal{F}}{\delta \nu}\vartriangleleft
\frac{\delta \mathcal{H}}{\delta \mu}\right\rangle} _{\text{C: via the
action $\sigma$}}.  \notag
\end{align}
\end{proposition}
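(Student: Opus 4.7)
The plan is to obtain this formula by Lie--Poisson reduction of the canonical Poisson bracket \eqref{PoissononT*GH} on $(\Fg^\ast\dcc\Fh^\ast)\rtimes(G\bowtie H)$ under the right-translation action of $G\bowtie H$, along the lines of the appendices. Equivalently, in view of the identification $(\Fg\bowtie\Fh)^\ast\cong\Fg^\ast\dcc\Fh^\ast$ recorded in the cobracket formula \eqref{Cob}, one may simply specialise the general Lie--Poisson formula $\{f,h\}(\mu)=\langle\mu,[\delta f/\delta\mu,\delta h/\delta\mu]\rangle$ on the dual of a Lie algebra to the matched pair Lie algebra $\Fg\bowtie\Fh$ with bracket \eqref{mpla}.

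Taking the second route, I would first note that the functional derivative of $\mathcal{H}(\mu,\nu)$ is the pair $(\delta\mathcal{H}/\delta\mu,\delta\mathcal{H}/\delta\nu)\in\Fg\oplus\Fh=\Fg\bowtie\Fh$, and likewise for $\mathcal{F}$. Substituting into the general formula yields
\[
\{\mathcal{H},\mathcal{F}\}(\mu,\nu)=\left\langle (\mu,\nu),\Big[\big(\tfrac{\delta\mathcal{H}}{\delta\mu},\tfrac{\delta\mathcal{H}}{\delta\nu}\big),\big(\tfrac{\delta\mathcal{F}}{\delta\mu},\tfrac{\delta\mathcal{F}}{\delta\nu}\big)\Big]\right\rangle,
\]
and expanding the bracket via \eqref{mpla} splits the right hand side into six terms. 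The $[\xi_1,\xi_2]$ and $[\eta_1,\eta_2]$ contributions pair with $\mu$ and $\nu$ respectively to produce block A; the two $\eta\vartriangleright\xi$ contributions pair with $\mu$ to produce block B; and the two $\eta\vartriangleleft\xi$ contributions pair with $\nu$ to produce block C, matching the claim term by term.

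Alternatively, the same formula can be read off directly from \eqref{PoissononT*GH} by Poisson reduction: a function on $(\Fg^\ast\dcc\Fh^\ast)\rtimes(G\bowtie H)$ is $(G\bowtie H)$-invariant under right translation precisely when it depends on $(\mu,\nu)$ alone, so for such functions $\delta\mathcal{H}/\delta g=\delta\mathcal{H}/\delta h=0$ and the $T^\ast R_g$, $T^\ast\rho_g$, and $M_1$ pieces of \eqref{PoissononT*GH} vanish. What survives are the Lie-bracket pieces of $K$ and $L$ (block A); the $M_2$ term for $\Fg$ acting on $\Fh^\ast$, which via \eqref{a-dual} rearranges into block C; and the $N$ term, which is already block B.

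The main obstacle, such as it is, is purely bookkeeping: each dual action $\overset{\ast}{\vartriangleright}$, $\overset{\ast}{\vartriangleleft}$, $\mathfrak{a}^\ast$, $\mathfrak{b}^\ast$ must be unfolded back to the elementary pairings $\langle\mu,\eta\vartriangleright\xi\rangle$ and $\langle\nu,\eta\vartriangleleft\xi\rangle$ via \eqref{b-dual} and \eqref{a-dual}, but no genuine computation is required beyond this substitution.
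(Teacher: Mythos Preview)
Your proposal is correct and follows essentially the same two-pronged approach as the paper: either by Poisson reduction of \eqref{PoissononT*GH} under the right $G\bowtie H$-action, or equivalently by pairing the cobracket \eqref{Cob} (i.e.\ the general Lie--Poisson formula on the dual of $\Fg\bowtie\Fh$) with the functional derivatives. The paper's own proof is in fact terser than yours, merely citing these two equivalent routes without spelling out the term-by-term matching.
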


\begin{proof}
The Poisson reduction of the symplectic
manifold $(\Fg^\ast\dcc\Fh^\ast)\rtimes (G\bowtie H)$ with respect to the action of $G\bowtie H$ yields the
Lie-Poisson bracket on $\Fg^\ast\dcc\Fh^\ast$. Alternatively, it can be defined directly by
\begin{equation*}
 \left\{ \mathcal{H},\mathcal{F}\right\} _{\Fg^\ast\dcc\Fh%
^\ast}(\mu,\nu) = \Big\langle \d_{\Fg^\ast\dcc\Fh^\ast}(\mu,\nu), \left(\frac{\delta \mathcal{H}}{%
\delta \mu},\frac{\delta \mathcal{H}}{%
\delta \nu}\right)\ot \left(\frac{\delta \mathcal{F}}{%
\delta \mu},\frac{\delta \mathcal{F}}{%
\delta \nu}\right)\Big\rangle.
\end{equation*}
Hence, the claim follows from the Lie coalgebra structure \eqref{Cob}.
\end{proof}

We call \eqref{LiePoissonongh} the matched Lie-Poisson
bracket. The terms labeled by A correspond to the Lie-Poisson brackets (\ref{LPB})
on $\Fg^{\ast}$ and $\Fh^{\ast}$, respectively. The third and fourth terms, labeled by B are the
manifestations of the left action of the group $H$ on $G$, and those labeled by C are the incarnations of the right action of $G$ on $H$.

We note that the Lie-Poison bracket on the direct product $(\Fg %
\times \Fh)^{\ast}=\Fg^{\ast} \times \Fh^{\ast}$, regarding the mutual actions trivial,
is represented by the terms labeled by A. Assuming only the right action of $G$ on $H$
to be trivial, the terms labeled by A and B encode the Lie-Poison bracket on the semi-direct
coproduct $(\Fg \rtimes \Fh)^{\ast}=\Fg^{\ast} \crr \Fh^{\ast}$. Finally, the terms
labeled by A and C together form the Lie-Poison bracket on the semi-direct
coproduct $(\Fg \ltimes \Fh)^{\ast}=\Fg^{\ast} \cl \Fh^{\ast}$. We refer the
reader to \cite{marsden1998symplectic,marsden1984reduction,marsden1984semidirect,marsden1983coadjoint,
marsden1983hamiltonian,
ortega2013momentum} for the semi-direct product theories.

\begin{corollary}
The Lie-Poisson equations on $\Fg^\ast\dcc\Fh^\ast$ are
\begin{equation}
\begin{cases}
\label{LPEgh} \displaystyle\frac{d\mu}{dt} =ad^{\ast}_{\frac{\delta\mathfrak{%
H}}{\delta\mu}}(\mu)+\mu\overset{\ast }{\vartriangleleft}\frac{%
\delta\mathcal{H}}{\delta\nu} + \mathfrak{a}_{\frac{\delta\mathcal{H}}{%
\delta\nu}}^{\ast}\nu, \\
\displaystyle\frac{d\nu}{dt} =ad^{\ast}_{\frac{\delta\mathcal{H}}{\delta\nu}%
}(\nu)+\frac{\delta\mathcal{H}}{\delta\mu} \overset{\ast }{%
\vartriangleright}\nu+ \mathfrak{b}_{\frac{\delta\mathcal{H}}{\delta\mu}%
}^{\ast}\mu.%
\end{cases}%
\end{equation}
\end{corollary}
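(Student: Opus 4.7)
The plan is to extract the equations of motion directly from the matched Lie-Poisson bracket \eqref{LiePoissonongh} established in the previous Proposition, by testing against an arbitrary observable $\mathcal{F}=\mathcal{F}(\mu,\nu)$ and reading off the coefficients of the independent variations $\delta \mathcal{F}/\delta \mu \in \Fg$ and $\delta \mathcal{F}/\delta \nu \in \Fh$. The defining identity for a Hamiltonian vector field,
\begin{equation*}
\left\langle \frac{d\mu}{dt}, \frac{\delta\mathcal{F}}{\delta\mu}\right\rangle + \left\langle \frac{d\nu}{dt}, \frac{\delta\mathcal{F}}{\delta\nu}\right\rangle = \{\mathcal{F}, \mathcal{H}\}_{\Fg^\ast\dcc\Fh^\ast}(\mu,\nu),
\end{equation*}
together with the fact that $\delta\mathcal{F}/\delta\mu$ and $\delta\mathcal{F}/\delta\nu$ are arbitrary, will force both sides of \eqref{LPEgh} upon us as soon as the right hand side is put in the standard pairing form.

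First I would evaluate $\{\mathcal{F}, \mathcal{H}\}$ by swapping $\mathcal{H}$ and $\mathcal{F}$ in the three blocks A, B, C of \eqref{LiePoissonongh}. The block A contributes two terms of the form $\langle \mu,[\delta_\mu\mathcal{F},\delta_\mu\mathcal{H}]\rangle$ and $\langle \nu,[\delta_\nu\mathcal{F},\delta_\nu\mathcal{H}]\rangle$, which, via the coadjoint duality \eqref{ad*} together with the antisymmetry of the Lie bracket, turn into $\langle \ad^\ast_{\delta_\mu\mathcal{H}}\mu,\delta_\mu\mathcal{F}\rangle$ and $\langle \ad^\ast_{\delta_\nu\mathcal{H}}\nu,\delta_\nu\mathcal{F}\rangle$; these will be the first summands in the $\dot\mu$ and $\dot\nu$ equations.

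Next, I would process the cross-terms using the two dualization families provided in the text. The $\rt$-terms of block B are converted through \eqref{b-dual}: one of them pairs naturally with $\delta_\mu\mathcal{F}$ and produces the summand $\mu\overset{\ast}{\vartriangleleft}\delta_\nu\mathcal{H}$ in the $\dot\mu$ equation, while the other pairs with $\delta_\nu\mathcal{F}$ and produces $\mathfrak{b}^\ast_{\delta_\mu\mathcal{H}}\mu$ in the $\dot\nu$ equation. Symmetrically, the $\lt$-terms of block C are processed with \eqref{a-dual}, yielding $\mathfrak{a}^\ast_{\delta_\nu\mathcal{H}}\nu$ in $\dot\mu$ and $\delta_\mu\mathcal{H}\overset{\ast}{\vartriangleright}\nu$ in $\dot\nu$. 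Collecting coefficients gives exactly \eqref{LPEgh}.

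An equivalent and shorter route, which I would mention as a consistency check, is to regard \eqref{LPEgh} as the reduction of the full Hamilton's equations \eqref{MPHamEq}: once $\mathcal{H}$ is taken to descend to $\Fg^\ast\dcc\Fh^\ast$, the variations $\delta\mathcal{H}/\delta g$ and $\delta\mathcal{H}/\delta h$ vanish, so the cotangent-lift contributions $T^\ast_{e_G}R_g$, $T^\ast_{e_H}\rho_g$ and $g\overset{\ast}{\vartriangleright}T^\ast_{e_H}R_h$ disappear from the third and fourth lines of \eqref{MPHamEq}, leaving exactly the claimed system. The only nontrivial obstacle is bookkeeping: one must keep the side conventions of $\overset{\ast}{\vartriangleleft}$ and $\overset{\ast}{\vartriangleright}$ straight between the two dualizations \eqref{b-dual} and \eqref{a-dual}, and use the antisymmetry of the bracket \eqref{mpla} whenever a term naturally carries $\delta_\mu\mathcal{F}$ or $\delta_\nu\mathcal{F}$ in the \emph{first} slot rather than the second; once this is respected, the identification with \eqref{ad-*} is immediate and the derivation becomes a routine calculation.
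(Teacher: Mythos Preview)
Your proposal is correct and matches what the paper intends: the corollary is stated without proof, as an immediate consequence of the matched Lie--Poisson bracket \eqref{LiePoissonongh}, and your main route (pairing $\dot F=\{F,\mathcal{H}\}$ against an arbitrary $F$ and reading off the coefficients of $\delta F/\delta\mu$ and $\delta F/\delta\nu$ via the dualities \eqref{b-dual}--\eqref{a-dual}) is exactly how one unpacks that implication. Your alternative check --- dropping the $g,h$-derivatives in \eqref{MPHamEq} --- is likewise the reduction argument the paper invokes in Appendix~\ref{reduc-Hamilton-dynam} for the single-group case, so both of your routes are already implicit in the surrounding text.
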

%\begin{equation}
%\begin{cases}
%\label{LPEgh} \displaystyle\frac{d\mu}{dt} =ad^{\ast}_{\frac{\delta\mathfrak{%
%H}}{\delta\mu}}(\mu)+\underbrace{\mu\overset{\ast }{\vartriangleleft}\frac{%
%\delta\mathcal{H}}{\delta\nu} + \mathfrak{a}_{\frac{\delta\mathcal{H}}{%
%\delta\nu}}^{\ast}\nu}_{\text{B: Due to the
%action $\rho$}} \\
%\displaystyle\frac{d\nu}{dt} =ad^{\ast}_{\frac{\delta\mathcal{H}}{\delta\nu}%
%}(\nu)+\underbrace{\frac{\delta\mathcal{H}}{\delta\mu} \overset{\ast }{%
%\vartriangleright}\nu+ \mathfrak{b}_{\frac{\delta\mathcal{H}}{\delta\mu}%
%}^{\ast}\mu}_{\text{C: Due to the
%action $\sigma$}}.%
%\end{cases}%
%\end{equation}
%that we call the matched Lie-Poisson equations on $\Fg^\ast\dcc\Fh^\ast$.
%Note that, the
%first terms on the right hand sides are comes from the individual
%Lie-Poisson equations (\ref{LPB}), and the second and the third terms in %
%\eqref{LPEgh} manifest the existence of the infinitesimal actions and their
%dualizations induced from the group actions. When, the action $\sigma$ is
%trivial, we have $H\ltimes G$ and the terms labeled by C drop in the set (%
%\ref{LPEgh}), whereas for the trivial $\rho$, we have $H\rtimes G$ and the
%terms labeled by B drop in the set (\ref{LPEgh}).

\subsection{Poisson Reduction by the actions of $G$ and $H$}

The groups $G$ and $H$ act on their matched pair $G\bowtie H$, and these actions can be lifted to the dual space $%
\Fg^\ast\dcc\Fh^\ast$, and to the cotangent bundle $T^{\ast }(G\bowtie H)$.
In view of \eqref{iso}, we arrive at the actions on the trivialized
space $(\Fg^\ast\dcc\Fh^\ast)\rtimes (G\bowtie H)$. We note that this is equivalent to the group multiplication (\ref{GrT*GH}) on $(\Fg^\ast\dcc\Fh^\ast)\rtimes (G\bowtie H)$, regarding $G$ and $H$ as subgroups. We further remark that these actions of $%
G$ and $H$ on $(\Fg^\ast\dcc\Fh^\ast)\rtimes (G\bowtie H)$ are symplectic.
Poisson reduction of the total space $(\Fg^\ast\dcc\Fh^\ast)\rtimes
(G\bowtie H)$ by the right action of the group $H$ results with the
semi-direct product Poisson manifold $(\Fg^\ast\dcc\Fh^\ast)\rtimes G$ with
a Poisson bracket which is structurally the same as (\ref{PoissononT*GH}) without the terms
labeled by $M_1$. The Poisson reduction of $(\Fg^\ast\dcc\Fh%
^\ast)\rtimes G$ under the right action of $G$ results with the Lie-Poisson bracket (\ref{LPEgh}) on $\Fg^\ast\dcc%
\Fh^\ast$, \cite{marsden2007hamiltonian}.

\subsection{Symplectic Reduction by $G$}

The symplectic action of the group $G$ on the trivialized total space $(\Fg^\ast\dcc%
\Fh^\ast)\rtimes (G\bowtie H)$ is
generated by the right invariant vector fields $X_{(0,0,\xi,0)}$, taking $%
\hat{\mu},\hat{\nu},\eta$ to be zero in (\ref{RIVF}). Following the
definition of the momentum mapping presented in (\ref{momlift}), the momentum map $\mathbf{J}_{L}^{G}$ is the projection from $%
(\Fg^\ast\dcc\Fh^\ast)\rtimes (G\bowtie H)$ to the first factor $\Fg^\ast$.
Hence the preimage of a regular value $\mu$ is isomorphic to $\Fh%
^\ast\rtimes (G\bowtie H)$. The isotropy subgroup $G_\mu$ acts on the preimage
as
\begin{equation*}
G_\mu\times (\Fh^\ast\rtimes (G\bowtie H)) \mapsto \Fh^\ast\rtimes (G\bowtie
H), \qquad (g,(\nu,g_2,h))\mapsto (g\overset{\ast }{\vartriangleright}\nu,gg_2,h).
\end{equation*}
The reduced symplectic manifold is then the quotient space $G_\mu\slash(\Fh^\ast\rtimes
(G\bowtie H))$ by the action of the isotropy subgroup $G_\mu$. This quotient space is
diffeomorphic to the product manifold $(G_\mu\slash\Fh^\ast)\times
\mathcal{O}_\mu \times H$. Here, $G_\mu\slash\Fh^\ast$ is the set of
right cosets whose elements are denoted by
\begin{equation*}
G_\mu\slash\Fh^\ast=\{_{G_\mu}[\nu]:=G_\mu\overset{\ast}{%
\vartriangleright}\nu \mid \nu \in \Fh^\ast \}.
\end{equation*}
We identify the tangent space $T_{_{G_\mu}[\nu]}(G_\mu\slash\Fh^\ast)$
with $\Fg_\mu\slash\Fh^\ast=\{_{\Fg_\mu}[\nu]:=\Fg_\mu\overset{\ast}{%
\vartriangleright}\nu \mid \nu \in \Fh^\ast \}$ where $\Fg_{\mu}$ is the
isotropy Lie subalgebra consisting of $\xi \in \Fg$ satisfying $%
ad^{\ast}_\xi \mu=0$, and a representative of $_{\Fg_\mu}[\nu]$
is given by $\xi\overset{\ast}{\vartriangleright}\nu = \hat\nu$ for $\xi \in \Fg_\mu $%
. Note that, the tangent space $T_\mu \mathcal{O}_\mu$ of the coadjoint orbit $\mathcal{O}_\mu$ consists of the
elements of the form $ad^{\ast}_\xi \mu$, with $\xi\in\Fg$.
Then the reduced symplectic two form $\Omega^{G\slash}$ takes the value
\begin{equation*}
\Omega^{G\slash}((\hat{\nu_1},ad^{\ast}_{\xi_1} \mu,TR_h \eta_1),(\hat{%
\nu_2},ad^{\ast}_{\xi_2} \mu,TR_h \eta_2))= \left\langle \hat{\nu_2}%
,\eta_1\right\rangle-\left\langle \hat{\nu_1},\eta_2\right\rangle+\left%
\langle (\mu ,\nu) ,[(\xi_1,\eta_1),(\xi_2,\eta_2)]\right\rangle
\end{equation*}
on
two vectors $(\hat{\nu_i},ad^{\ast}_{\xi_i} \mu,TR_h \eta_i)$, $i=1,2$, in the tangent
space at the point $\left(_{G_\mu}[\nu],\mu,h \right)$.
This symplectic structure is independent of $H$, hence, a symplectic
reduction and a Poisson reduction of $(G_\mu\slash\Fh^\ast)\times
\mathcal{O}_\mu \times H$ with respect to the right action of $H$ are also
possible. Note also that, in this case, the reduction by stages \cite%
{marsden2007hamiltonian} cannot be applicable since either $G$ or $H$ are
not necessarily normal subgroups of their matched pair.

\subsection{Legendre Transformation}

In our previous work \cite{essu2016LagMP}, we have been able to write the matched Euler-Lagrange
equation on the (left) trivialization of the tangent bundle $T(G\bowtie H)$ and Euler-Poincar\'{e}
equations on the matched pair Lie algebra $\Fg\bowtie \Fh$. In this subsection, we construct the Legendre transformation from the Lagrangian dynamics on $T(G\bowtie H)$ and $\Fg\bowtie \Fh$ to the Hamiltonian
dynamics on $T^\ast (G\bowtie H)$ and $\Fg^\ast \dcc \Fh^\ast$.

A Lagrangian density $\mathcal{L}=\mathcal{L}\left( g,h,\xi,\eta\right)$ on $(G\bowtie H)\ltimes(%
\Fg\bowtie\Fh) $ generates the matched Euler-Lagrange equations
\begin{align} \label{EL}
\begin{cases}
\displaystyle
\frac{d}{dt}\frac{\delta\mathcal{L}}{\delta\xi}  &  =T_{e_{G}}^{\ast}%
L_{g}\left(  \frac{\delta\mathcal{L}}{\delta g}\right)  \overset{\ast
}{\vartriangleleft}h+T_{e_{G}}^{\ast}\sigma_{h}\left(  \frac{\delta
\mathcal{L}}{\delta h}\right)  -ad_{\xi}^{\ast}\frac{\delta\mathcal{L}%
}{\delta\xi}+\frac{\delta\mathcal{L}}{\delta\xi}\overset{\ast}%
{\vartriangleleft}\eta+\mathfrak{a}_{\eta}^{\ast}\frac{\delta\mathcal{L}%
}{\delta\eta},\\
\displaystyle
\frac{d}{dt}\frac{\delta\mathcal{L}}{\delta\eta}  &  =T_{e_{H}}^{\ast}%
L_{h}\left(  \frac{\delta\mathcal{L}}{\delta h}\right)  -ad_{\eta}^{\ast
}\frac{\delta\mathcal{L}}{\delta\eta}-\xi\overset{\ast}{\vartriangleright
}\frac{\delta\mathcal{L}}{\delta\eta}-\mathfrak{b}_{\xi}^{\ast}\frac
{\delta\mathcal{L}}{\delta\xi}.
\end{cases}
\end{align}
When the Lagrangian is
independent of the group variables $g$ and $h$, that is, when there is a symmetry under the left action of the group $G \bowtie H$, then the matched pair Euler-Lagrange equations (\ref{EL})
reduce to the matched Euler-Poincar\'{e} equations
\begin{align} \label{mEP}
\begin{cases}
\displaystyle
\frac{d}{dt}\frac{\delta\mathfrak{L}}{\delta\xi}  &  =-ad_{\xi}^{\ast}%
\frac{\delta\mathfrak{L}}{\delta\xi}+\frac{\delta\mathfrak{L}}{\delta\xi
}\overset{\ast}{\vartriangleleft}\eta+\mathfrak{a}_{\eta}^{\ast}\frac
{\delta\mathfrak{L}}{\delta\eta},\\
\displaystyle
\frac{d}{dt}\frac{\delta\mathfrak{L}}{\delta\eta}  &  =-ad_{\eta}^{\ast}%
\frac{\delta\mathfrak{L}}{\delta\eta}-\xi\overset{\ast}{\vartriangleright
}\frac{\delta\mathfrak{L}}{\delta\eta}-\mathfrak{b}_{\xi}^{\ast}\frac
{\delta\mathfrak{L}}{\delta\xi}
\end{cases}
\end{align}
on the Lie algebra $\Fg \bowtie \Fh$.

We recall the diagram presented in (\ref{LLag-RHAm}) summarizing the Legendre transformation from the
left trivialization of the tangent bundle to the right trivialization of the cotangent bundle. From this, we first define a total energy
\begin{equation}
\mathcal{H}^L(g,h, \xi, \eta ,\mu^L, \nu^L)=\langle \mu^L, \xi \rangle + \langle \nu^L, \eta \rangle  - \mathcal{L}(g,h, \xi, \eta)
\end{equation}
on the left trivialization of the Pontryagin space $T(G \bowtie H)\times _ {G \bowtie H} T^\ast (G \bowtie H)$. When the Lagrangian is non degenerate, it is possible to solve the Lie algebra variables $\xi$ and $\eta$ in terms of the dual space elements $\mu^L,\nu^L$ from the equations
\begin{equation}
\mu^L=\frac{\delta\mathcal{L}}{\delta\xi}, \qquad \nu^L=\frac{\delta\mathcal{L}}{\delta\eta}.
\end{equation}
As such, the mapping
\begin{equation*}
(G \bowtie H)\ltimes (\Fg \bowtie \Fh)\rightarrow (G \bowtie H)\ltimes (\Fg^\ast \dcc \Fh^\ast)\qquad (g,h,\xi,\eta)\rightarrow (g,h,\frac{\delta \mathcal{L}}{\delta \xi},\frac{\delta \mathcal{L}}{\delta \eta})
\end{equation*}
becomes a diffeomorphism. In order to arrive at the Hamiltonian formulation on the right trivialization of the cotangent bundle, we employ the mapping
\begin{equation*}
\Phi:(G \bowtie H)\ltimes (\Fg^\ast \dcc \Fh^\ast)\rightarrow (\Fg^\ast \dcc \Fh^\ast)\rtimes (G \bowtie H),\qquad (g,h,\frac{\delta \mathcal{L}}{\delta \xi},\frac{\delta \mathcal{L}}{\delta \eta})
\rightarrow (Ad^{\ast}_{(g,h)^{-1}}(\frac{\delta\mathcal{L}}{\delta\xi},
\frac{\delta\mathcal{L}}{\delta\eta}),g,h),
\end{equation*}
 presented in (\ref{LR}). Here, $Ad^{\ast}$ is the coadjoint action given in (\ref{Ad-*}). If the Lagrangian is free from the group variables then this two step transformation takes the particular form
\begin{equation*}
\Fg \bowtie \Fh\rightarrow \Fg^\ast \dcc \Fh^\ast,\qquad (\xi,\eta)\rightarrow (Ad^{\ast}_{(g,h)^{-1}}(\frac{\delta\mathcal{L}}{\delta\xi},
\frac{\delta\mathcal{L}}{\delta\eta})).
\end{equation*}

For the degenerate cases, such a direct transformation is not possible. The Tulczyjew triplet is the geometric structure which enables the Legendre transformation for the degenerate systems \cite{tulczyjew1977legendre}. We also refer to \cite{esen2014tulczyjew,esen2015tulczyjew,zajkac2016tulczyjew} for the Lie groups.

\section{Example: Hamiltonian Dynamics on the group $SL(2,\mathbb{C}%
)$}

In this section we study the Hamiltonian dynamics on the dual space of the Lie algebra of $SL(2,\Cb)$. To this end we first recall the matched pair decomposition $SL(2,\Cb) = SU(2) \bowtie K$
into $SU(2)$ and its half-real form $K$ from \cite{Maji90-II}. It then follows that $\mathfrak{sl}(2,\Cb) =%
\mathfrak{su}(2)\bowtie \mathfrak{K}$, corresponding to the Iwasawa
decomposition of $\mathfrak{sl}(2,\mathbb{C})$ as a real Lie algebra, where $%
\mathfrak{su}(2)$ is the Lie algebra of $SU(2)$ and $\mathfrak{K}$ is the
Lie algebra of the group $K$.

We shall continue to follow the notation of \cite{essu2016LagMP}, namely,
\begin{align}
A & \in SU(2)\text{,}\qquad B\in K\text{,}\qquad\mathbf{X},\mathbf{X}_{1},%
\mathbf{X}_{2}\in\mathfrak{su}(2)\simeq\mathbb{R}^{3},\qquad \mathbf{Y},%
\mathbf{Y}_{1},\mathbf{Y}_{2}\in\mathfrak{K}\simeq\mathbb{R}^{3},  \notag \\
\mathbf{\Phi} & \in\mathfrak{su}^{\ast}(2)\simeq\mathbb{R}^{3}\text{,}\qquad%
\mathbf{\Psi}\in\mathfrak{K}^{\ast}\simeq\mathbb{R}^{3},\qquad \mathbf{k}%
=(0,0,1)\in\mathbb{R}^{3},  \label{Exnot}
\end{align}
for the generic elements of the groups $SU(2)$ and $K$, their Lie algebras $%
\mathfrak{su}(2)$ and $\mathfrak{K}$, and their linear duals $\mathfrak{su}%
(2)^{\ast} $ and $\mathfrak{K}^{\ast}$.

We recall that the group $SU(2)$ is the matrix Lie group
\begin{equation*}
SU(2)=\left\{
\begin{pmatrix}
\omega & \vartheta \\
-\bar{\vartheta} & \bar{\omega}%
\end{pmatrix}%
\in SL(2,\mathbb{C}):\left\vert \omega \right\vert ^{2}+\left\vert \vartheta
\right\vert ^{2}=1\right\},
\end{equation*}%
with the Lie algebra
\begin{equation*}
\mathfrak{su}(2)=\left\{ \frac{-\iota }{2}\left(
\begin{array}{cr}
t & r-\iota s \\
r+\iota s & -t%
\end{array}%
\right) :r,s,t\in \mathbb{R}\right\}
\end{equation*}%
of traceless skew-hermitian matrices. Following \cite{Maji90-II} we fix
\begin{equation*}
e_{1}=\left(
\begin{array}{cc}
0 & -\iota /2 \\
-\iota /2 & 0%
\end{array}%
\right) ,\,\,e_{2}=\left(
\begin{array}{cc}
0 & -1/2 \\
1/2 & 0%
\end{array}%
\right) ,\,\,e_{3}=\left(
\begin{array}{cc}
-\iota /2 & 0 \\
0 & \iota /2%
\end{array}%
\right)
\end{equation*}
as a basis of $\mathfrak{su}(2)$, via which we identify the Lie algebra $%
\mathfrak{su}(2) $ with the Lie algebra $\mathbb{R}^{3}$ by cross product,
\begin{equation*}
re_{1}+se_{2}+te_{3}\in \mathfrak{su}(2)\mapsto \mathbf{X}=\left(
r,s,t\right) \in \mathbb{R}^{3}.
\end{equation*}
Moreover, using the Euclidean dot product, we also identify the dual space $%
\mathfrak{su}(2)^{\ast }$ of $\mathfrak{su}(2)\simeq \mathbb{R}^{3}$ with $%
\mathbb{R}^{3}$. Then the coadjoint action of the Lie algebra $\mathfrak{su}%
(2)\simeq \mathbb{R}^{3}$ on $\mathfrak{su}(2)^{\ast }\simeq \mathbb{R}^{3}$
is given by
\begin{equation}
{\label{ad*1}} ad^{\ast }:\mathfrak{su}(2)\times \mathfrak{su}(2)^{\ast
}\rightarrow \mathfrak{su}(2)^{\ast },\text{ \ \ }(\mathbf{X},\mathbf{\Phi }%
)\mapsto ad_{\mathbf{X}}^{\ast }\mathbf{\Phi =X}\times \mathbf{\Phi },
\end{equation}
for $\mathbf{X}\in \mathfrak{su}(2)\simeq \mathbb{R}^{3}$ and $\mathbf{\Phi }%
\in \mathfrak{su}^{\ast }(2)\simeq \mathbb{R}^{3}$.

We recall also the subgroup $K\subseteq SL(2,\mathbb{C})$ which is given by
\begin{equation*}
K=\left\{ \frac{1}{\sqrt{1+c}}%
\begin{pmatrix}
1+c & 0 \\
a+ib & 1%
\end{pmatrix}
\in SL(2,\mathbb{C})\mid a,b\in\mathbb{R} \text{ and }c>-1\right\}.
\end{equation*}
The group $K$ can be identified with
\begin{equation*}
K=\left\{ (a,b,c)\in\mathbb{R}^{3} \mid a,b\in\mathbb{R}\text{ and }%
c>-1\right\},
\end{equation*}
where the group operation is
\begin{equation*}
(a_{1},b_{1},c_{1})%
\ast(a_{2},b_{2},c_{2})=(a_{1},b_{1},c_{1})(1+c_{2})+(a_{2},b_{2},c_{2}).
\end{equation*}
In this case $\mathfrak{K} = \mathbb{R}^{3}$ with the Lie bracket
\begin{equation}
\lbrack\mathbf{Y}_{1},\mathbf{Y}_{2}] = \mathbf{k}\times (\mathbf{Y}%
_{1}\times\mathbf{Y}_{2}),  \label{crossk}
\end{equation}
where $\mathbf{k}$ is the unit vector $(0,0,1)$. Then, the coadjoint action
of the Lie algebra $\mathfrak{K}\simeq\mathbb{R}^{3}$ on its dual space $%
\mathfrak{K}^{\ast} \simeq \mathbb{R}^3$ can be computed as
\begin{equation}
ad^{\ast}: \mathfrak{K}\times\mathfrak{K}^{\ast}\rightarrow\mathfrak{K}%
^{\ast },\text{ \ \ } (\mathbf{Y},\mathbf{\Psi})\mapsto ad_{\mathbf{Y}%
}^{\ast }\mathbf{\Psi}=\left( \mathbf{k}\cdot\mathbf{Y}\right) \mathbf{\Psi}%
-\left( \mathbf{\Psi}\cdot\mathbf{Y}\right) \mathbf{k}, {\label{ad*2}}
\end{equation}
for any $\mathbf{Y}\in\mathfrak{K}\simeq\mathbb{R}^{3}$, and any $\mathbf{%
\Psi}\in\mathfrak{K}^{\ast}\simeq\mathbb{R}^{3}$.

Recalling from \cite{essu2016LagMP}, the mutual actions of the Lie algebras $%
\mathfrak{su}(2)$ and $\mathfrak{K}$ are given by
\begin{eqnarray*}
\mathbf{\vartriangleright}&:&\mathfrak{K}\times\mathfrak{su}\left( 2\right)
\rightarrow\mathfrak{su}\left( 2\right) \text{, \ \ }({\mathbf{Y}}, {\mathbf{%
X}})\mapsto \mathbf{Y\vartriangleright X} := \mathbf{Y}\times(\mathbf{X}%
\times\mathbf{k}), \\
\vartriangleleft&:&\mathfrak{K}\times\mathfrak{su}\left( 2\right) \rightarrow%
\mathfrak{K,}\text{ \ \ }\left( \mathbf{Y},\mathbf{X}\right) \mapsto \mathbf{%
Y}\vartriangleleft\mathbf{X} := \mathbf{X}\times \mathbf{Y},
\end{eqnarray*}
for any $\mathbf{Y} \in \mathfrak{K}\cong\mathbb{R}^{3}$, and any $\mathbf{X}%
\in\mathfrak{su}\left( 2\right) \cong\mathbb{R}^{3}$. Accordingly, the dual
actions are given by
\begin{eqnarray}
\overset{\ast}{\vartriangleleft}&:&\mathfrak{su}\left( 2\right)^\ast \times
\mathfrak{K} \rightarrow\mathfrak{su}\left( 2\right)^\ast \text{, \ \ }(%
\mathbf{\Phi}, {\mathbf{Y}})\mapsto \mathbf{\Phi} \overset{\ast}{%
\vartriangleleft} \mathbf{Y} := \mathbf{\Phi} \times (\mathbf{k}\times
\mathbf{Y}),  \label{duals} \\
\overset{\ast}{\vartriangleright}&:&\mathfrak{su}\left( 2\right) \times
\mathfrak{K}^\ast \rightarrow \mathfrak{K}^\ast, \text{ \ \ } \left(\mathbf{X%
}, \mathbf{\Psi}\right) \mapsto \mathbf{X} \overset{\ast}{\vartriangleright}
\mathbf{\Psi} := \mathbf{\Psi} \times \mathbf{X}.  \label{duals2}
\end{eqnarray}

We recall the mappings (\ref{a}) and (\ref{etaonxi}), and write for the case
of $\mathfrak{sl}(2,\Cb)$ as follows
\begin{eqnarray*}
\mathfrak{a}_{\mathbf{Y}}&:&\mathfrak{su}\left( 2\right)\rightarrow
\mathfrak{K}, \qquad \mathbf{X}\mapsto \mathbf{X\times Y} \\
\mathfrak{b}_{\mathbf{X}}&:&\mathfrak{K}\rightarrow \mathfrak{su}\left(
2\right), \qquad {\mathbf{Y}}\mapsto \mathbf{Y}\times(\mathbf{X}\times%
\mathbf{k}),
\end{eqnarray*}
whose duals are given by
\begin{eqnarray}
\mathfrak{a}_{\mathbf{Y}}^{\ast}&:&\mathfrak{K}^{\ast}\rightarrow\mathfrak{su%
}\left( 2\right)^{\ast} \text{, \ \ }\mathbf{\Psi}\mapsto \mathfrak{a}_{%
\mathbf{Y}}^{\ast}({\mathbf{\Psi}}) = \mathbf{Y\times \Psi}  \label{a-b1} \\
\mathfrak{b}_{\mathbf{X}}^{\ast}&:&\mathfrak{su}\left( 2\right)^{\ast}
\rightarrow\mathfrak{K}^{\ast}\text{, \ \ }\mathbf{\Phi}\mapsto \mathfrak{b}%
_{\mathbf{X}}^{\ast}\mathbf{\Phi} = \left( \mathbf{\Phi\cdot k}\right)
\mathbf{X}-\left( \mathbf{\Phi\cdot X}\right) \mathbf{k}  \label{a-b2}
\end{eqnarray}
respectively.

Finally, the matched
Lie-Poisson equations (\ref{LPEgh})
for a Hamiltonian function $\mathcal{H}=\mathcal{H}(\mathbf{\Phi%
},\mathbf{\Psi})$ on $\Rb^3\dcc \Rb^3$ take the form of
\begin{equation}
\begin{cases}
\label{LPESL} \displaystyle\frac{d\mathbf{\Phi}}{dt} = \left( \frac{\delta%
\mathcal{H}}{\delta\mathbf{\Phi}}+ \frac{\delta\mathcal{H}}{\delta\mathbf{%
\Psi}}\times\mathbf{k} \right) \times \mathbf{\Phi}+\frac{\delta\mathcal{H}}{%
\delta\mathbf{\Psi}}\times \mathbf{\Psi}, \\
\displaystyle\frac{d\mathbf{\Psi}}{dt} = \left(\mathbf{k}\cdot\frac{\delta%
\mathcal{H}}{\delta\mathbf{\Psi}}\right)\mathbf{\Psi} -\left(\mathbf{\Psi}%
\cdot\frac{\delta\mathcal{H}}{\delta\mathbf{\Psi}} +\mathbf{\Phi}\cdot \frac{%
\delta\mathcal{H}}{\delta\mathbf{\Phi}}\right)\mathbf{k} +\mathbf{\Psi}%
\times \frac{\delta\mathcal{H}}{\delta\mathbf{\Phi}} +(\mathbf{\Phi}\cdot%
\mathbf{k})\frac{\delta\mathcal{H}}{\delta\mathbf{\Phi}}.%
\end{cases}%
\end{equation}

\section{Conclusion and Discussions}
In this paper, we have presented the canonical symplectic and Poisson structures on the (right) trivialization of the cotangent bundle $T^{\ast
}(G\bowtie H)$ of a matched pair Lie group $G\bowtie H$, see equations (\ref{SymT*GH}) and (\ref{PoissononT*GH}). We have written the Hamiltonian dynamics on the trivialized bundle, and applied the Poisson and the symplectic reductions under the group actions $G$ and $G\bowtie H$. The matched Lie-Poisson bracket (\ref{LiePoissonongh}) and the matched Lie-Poisson equations (\ref{LPEgh}) have been derived dualizing the Lie coalgebra structure on $\Fg^\ast\dcc\Fh^\ast$. As an example, we have written the Lie-Poisson equations on the dual space $\Rb^3\dcc \Rb^3$.

Inspired by the Kac decomposition \cite{Kac68,MoscRang09}, in a forthcoming paper, we are planning to
study the case of diffeomorphism groups.

\appendix

\section{Symplectic and Poisson Reductions}\label{symplec-Poisson-reduc}

Let $\mathcal{Q}$ be a smooth manifold, and $T^{\ast }%
\mathcal{Q}$ is its cotangent bundle. There exists a canonical one-form $\theta _{T^{\ast }\mathcal{Q}}$ on $T^{\ast }%
\mathcal{Q}$ whose exterior derivative is the canonical symplectic two-form $\omega _{T^{\ast }\mathcal{Q}}$. Let also, a Lie group $G$ acts on $T^\ast Q$ symplectically inducing an $Ad^{\ast }$-equivariant momentum map $%
\mathbf{J}_{T^{\ast }\mathcal{Q}}:T^{\ast }\mathcal{Q}\rightarrow \Fg^{\ast }
$ given by
\begin{equation}
\left\langle \mathbf{J}_{T^{\ast }\mathcal{Q}}\left( z\right) ,\xi
\right\rangle =\left\langle \theta _{T^{\ast }\mathcal{Q}},X_{\xi}^{T^{\ast }%
\mathcal{Q}}\right\rangle \left( z\right) ,  \label{mom}
\end{equation}%
where the pairing on left hand side is the one between $\Fg^{\ast }$ and $\Fg
$, and the pairing on right hand side is the one between $T_{z}^{\ast
}T^{\ast }\mathcal{Q}$ and $T_{z}T^{\ast }\mathcal{Q}$, \cite
{abraham1978foundations, arnol2013mathematical, holm2009geometric, libermann2012symplectic, MarsdenRatiu-book}. Here, $X_{\xi}^{T^{\ast }\mathcal{Q}}$ is the
infinitesimal generator of the action corresponding to $\xi \in \Fg$.

In
particular, let the left action $T^{\ast}\Phi_{g^{-1}}$ of $G$ on $%
T^{\ast }\mathcal{Q}$ be the cotangent lift of a left action $\Phi_g$ of $G$
on $\mathcal{Q}$ generated by the vector fields $X_{\xi}^{\mathcal{Q}}$. In
this case, the action is symplectic, generated by the cotangent lift $%
X_{\xi}^{T^{\ast }\mathcal{Q}}=(X_{\xi}^{\mathcal{Q}})^{c\ast}$ of $X_{\xi}^{%
\mathcal{Q}}$ satisfying the identity $T\pi_{\mathcal{Q}}\circ
X_{\xi}^{T^{\ast }\mathcal{Q}} =X_{\xi}^{\mathcal{Q}}\circ \pi_\mathcal{Q}$,
and the momentum map turns out to be
\begin{equation}
\left\langle \mathbf{J}_{T^{\ast }\mathcal{Q}}\left( z\right) ,\xi
\right\rangle =\left\langle z,X_{\xi}^{\mathcal{Q}}\right\rangle.
\label{momlift}
\end{equation}

Let $G_{\mu}$ be the isotropy group of $\mu \in \Fg^{\ast }$ under the
coadjoint action $Ad^{\ast }$ of $G$. Let also $\mu$ be a regular value of $%
\mathbf{J}$, that is, $\mathbf{J}^{-1}\left( \mu \right) $ is a submanifold
of $T^{\ast }\mathcal{Q}$, and it is an invariant set
for dynamics. As a result, $\mathbf{J}^{-1}\left( \mu \right)$ inherits the left action of the Lie group $G_{\mu }$.
If this action is free and proper, then $T^{\ast }\mathcal{Q}^{\left. G\right\slash }:=G_{\mu }\slash
\mathbf{J}_{T^{\ast }\mathcal{Q}}^{-1}\left( \mu \right)$ is a manifold,
called the reduced phase space. Let $\imath_{\mu}:\mathbf{J}%
^{-1}\left( \mu \right) \rightarrow T^{\ast }\mathcal{Q}$ be the natural
injection, and $\chi _{\mu }:\mathbf{J}%
^{-1}\left( \mu \right) \rightarrow G_{\mu }\slash \mathbf{J}_{T^{\ast }%
\mathcal{Q}}^{-1}\left( \mu \right)$ be the projection. The reduced manifold $G_{\mu }\slash \mathbf{J}_{T^{\ast }%
\mathcal{Q}}^{-1}\left( \mu \right)$ has a unique symplectic structure $%
\Omega_{{\mu }}$ satisfying $\chi_{\mu }^{\ast }\Omega _{{\mu }}=\imath_{\mu
}\Omega _{T^{\ast }\mathcal{Q}}$. In short, the following diagram commutes (known as the symplectic
reduction theorem \cite{marsden1974reduction,meyer1973symmetries}):
\begin{equation}
\xymatrix{\mathbf{J}_{T^{\ast }\mathcal{Q}}^{-1}\left( \mu
\right)\ar[rrr]^{\imath _{\mu}}\ar[dd]_{\chi_{\mu}}&&&T^{\ast }\mathcal{Q}
\ar[ddlll]^{p_{\mu}} \\\\T^{\ast }\mathcal{Q}^{\left. G\right\slash
}:=G_{\mu }\slash \mathbf{J}_{T^{\ast }\mathcal{Q}}^{-1}\left( \mu
\right)}  \label{srd}
\end{equation}
where $p_{\mu }$ is the projection from $T^{\ast }\mathcal{Q}$ to the
reduced space $T^{\ast }\mathcal{Q}^{\left. G\right\slash }$.

In order to evaluate the symplectic form, let $\left[ z\right] \in
T^{\ast }\mathcal{Q}^{\left. G\right\slash }$ and $X_{\left[ z\right]
},Y_{\left[ z\right] }\in T_{\left[ z\right] }T^{\ast }\mathcal{Q}^{\left.
G\right\slash }$. Then, for $z\in p _{\mu
}^{-1}\left( \left[ z\right] \right)$, we have $T_{z}p _{\mu }\left( X_{z}\right) =X_{\left[ z \right] }$ and $T_{z}p
_{\mu }\left( Y_{z}\right) =Y_{\left[ z\right] }$, and the value of symplectic form is given by
\begin{equation}
\Omega _{\mu }\left( \left[ z\right] \right) \left( X_{\left[ z\right] },Y_{%
\left[ z\right] }\right) =\left. \Omega \left( z\right) \right\vert _{%
\mathbf{J}^{-1}\left( \mu \right) }\left( X_{z},Y_{z}\right).
\end{equation}

Given a $G$-invariant Hamiltonian
function $h,$ the reduced Hamiltonian function is $h_{\mu }=h\circ p _{\mu }$. Then the corresponding Hamiltonian vector fields $X_{h}$ and $X_{h_{\mu }}$ are $p_{\mu }$-related, and the trajectories of $X_{h}$ project onto those of $X_{h_{\mu }}$.

Let $\left( \mathcal{P},\left\{ \text{ },\text{ }\right\} _{\mathcal{P}}\right)$ be a
Poisson manifold with a free, proper, and canonical action of a Lie group $G$. There is then a Poisson structure on the quotient
manifold $\mathcal{P}/G$ which makes the projection $\pi :\mathcal{P}%
\rightarrow \mathcal{P}/G$ a Poisson map. For the $G$-invariant
functions $f$ and $h$ on $\mathcal{P}$, the Poisson structure on the quotient is given by
\begin{equation}  \label{PR}
\left\{ f\circ \pi ,h\circ \pi \right\} _{\mathcal{P}/G}=\left\{ f,h\right\}
_{\mathcal{P}}\circ \pi .
\end{equation}%
This procedure is called the Poisson reduction \cite{marsden1986reduction}.

\section{Hamiltonian Dynamics on Lie groups}\label{Hamilton-dynam-matched}

The cotangent lifts of the left and right translations on $G$ result with
the actions of $G$ on its cotangent bundle $T^{\ast}G$ given by
\begin{eqnarray}
G\times T^{\ast }G\mapsto T^{\ast }G,\qquad (g_{1},\alpha_{g_2}) \mapsto
T_{g_{1}g_{2}}^{\ast }L_{g_{1}^{-1}}\alpha_{g_2},  \label{GonT*G} \\
T^{\ast }G\times G\mapsto T^{\ast }G,\qquad (\alpha_{g_1},g_{2}) \mapsto
T_{g_{1}g_{2}}^{\ast }R_{g_{2}^{-1}}\alpha_{g_1},  \label{T*GonG}
\end{eqnarray}
respectively. On the other hand, the cotangent bundle $T^{\ast }G$ admits global trivializations
\begin{eqnarray}
tr_{T^{\ast }G}^{R}:T^{\ast }G\mapsto \Fg^{\ast }\rtimes G,\qquad \alpha
_{g}\mapsto (\mu =T_{e}^{\ast }R_{g}\alpha _{g},g),  \label{rt*G} \\
tr_{T^{\ast }G}^{L}:T^{\ast }G\mapsto G\ltimes\Fg^{\ast },\qquad \alpha
_{g}\mapsto (\mu =T_{e}^{\ast }L_{g}\alpha _{g},g),  \label{lt*G}
\end{eqnarray}
by which the semi-direct product Lie group structures
\begin{eqnarray}
(\mu _{1},g_{1})(\mu _{2},g_{2})=(\mu _{1}+Ad_{g_{1}}^{\ast }\mu
_{2},g_{1}g_{2}),  \label{GrT*Gright} \\
(g_{1},\mu _{1})(g_{2},\mu _{2})=(g_{1}g_{2}, Ad_{g_{2}^{-1}}^{\ast }\mu
_{1}+\mu _{2}),  \label{GrT*Gleft}
\end{eqnarray}%
on $\Fg^\ast \rtimes G$ and $G\ltimes \Fg^\ast$, respectively, are pulled back to $T^{\ast}G$. As such, the cotangent bundle carries a group structure as well, \cite%
{lichnerowicz1986characterization}. The Lie algebra
of the group $\mathfrak{g}^\ast\rtimes G$ is the semi-direct sum Lie algebra $\mathfrak{g}^\ast\rtimes \mathfrak{g}$ equipped with the Lie bracket
\begin{eqnarray*}
[(\hat{\mu}_{1},\xi _{1}),(\hat{\mu}_{2},\xi _{2})] = \left( ad_{\xi _{1}}^{\ast }\hat{\mu}_{2}-ad_{\xi _{2}}^{\ast }\hat{\mu}%
_{1},[\xi _{1},\xi _{2}]\right),
\end{eqnarray*}
for two generic elements $(\hat{\mu}_{1},\xi _{1}),(\hat{\mu}_{2},\xi _{2})\in\mathfrak{g}^\ast\rtimes \mathfrak{g}$. Similarly, the Lie algebra $\Fg \ltimes \Fg^\ast$ admits the semi-direct sum Lie algebra structure given by
\begin{eqnarray*}
[(\xi _{1}, \hat{\mu}_{1}),(\xi _{2},\hat{\mu}_{2})] = \left([\xi _{1},\xi _{2}], ad_{\xi _{1}}^{\ast }\hat{\mu}_{2}-ad_{\xi _{2}}^{\ast }\hat{\mu}%
_{1}\right),
\end{eqnarray*}

Accordingly, the lifted left and the right actions (\ref{GonT*G}) and (\ref{T*GonG}) on the (right) trivialized - by (\ref{rt*G}) - cotangent bundle
are given by
\begin{eqnarray}
G\times (\Fg^{\ast }\rtimes G)\mapsto \Fg^{\ast }\rtimes G,\qquad
(g_{1},(g_{2},\mu _{2}))&=&(Ad^{\ast}_{g_{1}}\mu _{2},g_{1}g_{2}),
\label{GontrT*Gleft} \\
(\Fg^{\ast }\rtimes G)\times G\mapsto \Fg^{\ast }\rtimes G,\qquad (\mu
_{1},g_{1})(g_{2})&=&(\mu _{1},g_{1}g_{2}),  \label{GontrT*Gright}
\end{eqnarray}
or directly by setting $\mu_1$ (and respectively $\mu_2$) to be zero in the group operation (\ref{GrT*Gright}).

Being a cotangent bundle, $T^{\ast }G$ is a symplectic manifold carrying an
exact symplectic two-form. Using the (right) trivialization map \eqref{rt*G}, the canonical one-form $\theta _{T^{\ast }G}$, and the symplectic
two-from $\Omega _{T^{\ast }G}$ on $T^{\ast }G$, can be pushed forward to $\Fg^{\ast }\rtimes G$.
As such, there appears a one-form $\theta _{\Fg^{\ast }\rtimes G}$, and a
symplectic two-form $\Omega _{\Fg^{\ast }\rtimes G}$ on the semidirect
product Lie group $\Fg^{\ast }\rtimes G$.

Recalling that the right invariant vector fields $%
X_{\left( \hat{\mu},\xi \right) }^{\Fg^{\ast }\rtimes G}$ on $\Fg^{\ast
}\rtimes G$ are determined uniquely by an element $\left( \hat{\mu},\xi
\right) $ in the Lie algebra $\Fg^{\ast }\rtimes \Fg$ of $\Fg^{\ast }\rtimes
G$ as
\begin{equation}
{\label{livfG}}X_{\left( \hat{\mu},\xi \right) }^{\Fg^{\ast }\rtimes G}(\mu
,g)=\left( \hat{\mu}+ad_{\xi }^{\ast }\mu ,TR_{g}{\xi }\right),
\end{equation}%
the values of the one-form $\theta _{\Fg^{\ast }\rtimes G}$, and the symplectic
two-from $\Omega _{\Fg^{\ast }\rtimes G}$, on the right invariant vector fields $%
{}^{R}X_{\left( \hat{\mu},\xi \right) }^{\mathfrak{g}^{\ast }\rtimes G}$ are given by
\begin{eqnarray}
\left\langle \theta _{\Fg^{\ast }\rtimes G},X_{\left( \hat{\mu},\xi \right)
}^{\Fg^{\ast }\rtimes G}\right\rangle \left( \mu, g\right) &=&\left\langle
\mu ,\xi \right\rangle,  \label{1formT*G} \\
\left\langle \Omega _{\Fg^{\ast }\rtimes G};\left( X_{\left(\hat{\mu}%
_{1},\xi _{1}\right) }^{\Fg^{\ast }\rtimes G},X_{\left( \hat{\mu}_{2},\xi
_{2}\right) }^{\Fg^{\ast }\rtimes G}\right) \right\rangle \left( \mu,g
\right) &=&\left\langle \hat{\mu}_{2},\xi _{1}\right\rangle-\left\langle
\hat{\mu}_{1},\xi _{2}\right\rangle +\left\langle \mu ,[\xi _{1},\xi
_{2}]\right\rangle ,  \label{2formT*G}
\end{eqnarray}%
where the bracket in the right hand side of the second line
is the Lie algebra bracket on $\Fg$, see \cite{abraham1978foundations,
AlekGrabMarmMich94, manga2015geometry}.

It is thus possible to derive the Hamilton's equations on the trivialized cotangent bundle. The Hamilton's equations are defined
by $i_{X_{\mathcal{H}}}\Omega =d\mathcal{H}$. Hence, for a
Hamiltonian function $\mathcal{H}(\mu,g)$ on $\Fg^{\ast }\rtimes G$, the
Hamilton's equations turn out to be
\begin{equation}  \label{HameqonT*G}
\dot{\mu}=-T^{\ast}R_g(\frac{\delta \mathcal{H}}{\delta g})+ad^{\ast}_{%
\frac{\delta \mathcal{H}}{\delta \mu}}\mu, \qquad \dot{g}=TR_{g}(\frac{%
\delta \mathcal{H}}{\delta \mu}).
\end{equation}
The canonical Poisson bracket of two functionals $\mathcal{H}$ and $%
\mathcal{F}$ on $\Fg^{\ast }\rtimes G$ is similarly given by
\begin{equation}  \label{PoissononT*G}
\left\{ \mathcal{H},\mathcal{F}\right\} _{\Fg^{\ast }\rtimes
G}(\mu,g)=\left\langle T^{\ast}R_{g}(\frac{\delta \mathcal{H}}{\delta g}),%
\frac{\delta \mathcal{F}}{\delta \mu}\right\rangle -\left\langle
T^{\ast}R_{g}(\frac{\delta \mathcal{F}}{\delta g}),\frac{\delta \mathcal{H}%
}{\delta \mu}\right\rangle +\left\langle \mu ,[\frac{\delta \mathcal{H}}{%
\delta \mu},\frac{\delta \mathcal{F}}{\delta \mu}]\right\rangle.
\end{equation}

\section{Reductions of Hamiltonian Dynamics on Lie Groups}\label{reduc-Hamilton-dynam}

The infinitesimal generators of the left action of $G$ on $\Fg^{\ast
}\rtimes G$ are the right invariant vector fields on $\Fg^{\ast }\rtimes G$, and
they are of the form ${}^{R}X_{\left(0,\xi \right) }^{\Fg^{\ast }\rtimes G}$,
simply by setting $\hat{\mu}=0$ in (\ref{livfG}). It then follows from (\ref{mom}) and (\ref{1formT*G})
that the momentum mapping $\mathbf{J}_L$ of the left action of $G$ on $\Fg%
^{\ast }\rtimes G$ becomes simply the projection onto the first factor, that is, $%
\mathbf{J}_L:(\mu,g)\mapsto\mu$. In other words, the inverse image $%
\mathbf{J}_L^{-1}(\mu)\in \Fg^\ast\rtimes G$ of any $\mu \in \Fg^\ast$ can be identified with the group $G$.
In this case, the isotropy subgroup $G_{\mu }$ of the coadjoint action is given by $\left\{ g\in
G:Ad_{g}^{R\ast }\mu =\mu \right\}$. Hence, each quotient space $G_{\mu
}\slash \mathbf{J}_{L}^{-1}\left( \mu \right)$ is isomorphic to the
coadjoint orbit $\mathcal{O}_\mu=\left\{ Ad_{g}^{\ast }\mu \in \Fg^{\ast
}:g\in G\right\}$ through the point $\mu \in \Fg^{\ast }$. More explicitly, the
identification $G_{\mu }\slash G \cong \mathcal{O}_{\mu }$
is given by $[g]_{\mu }\mapsto Ad_{g}^{\ast }\mu $. As a result, the
reduction diagram (\ref{srd}) turns out to be
\begin{equation}  \label{SRT*G}
\xymatrix{\mathbf{J}_{L}^{-1}\left( \mu \right) =G \ar[rr]^{\imath _{\mu}}
\ar[dd]_{\chi _{\mu}} && \Fg^{\ast }\rtimes G \ar[ddll]^{p_{\mu}}\\\\ G_{\mu
}\slash \mathbf{J}_{L}^{-1}\left( \mu \right)=\mathcal{O}_{\mu }}.
\end{equation}
The reduced space $\mathcal{O}_{\mu }$ is a symplectic manifold with the
reduced symplectic Kostant-Krillov-Souriau two-form $\Omega _{\Fg^{\ast
}\rtimes G}^{G\slash }$ whose values on the vectors $X^{\mathcal{O%
}_{\mu }}_{\xi}(\mu)=ad_{\xi }^{\ast }\mu$ are given by
\begin{equation}
\left\langle \Omega _{\Fg^{\ast }\rtimes G}^{G\slash };(X^{\mathcal{O}%
_{\mu }}_{\xi_1},X^{\mathcal{O}_{\mu }}_{\xi_2}) \right\rangle \left( \mu
\right) =\left\langle \mu ,\left[ \xi_1,\xi_2 \right] \right\rangle,
\label{KKS}
\end{equation}%
which is the well-known form of the coadjoint orbit symplectic structure.

The group $G$ acts on the symplectic manifold $(\Fg^{\ast }\rtimes G,\Omega
_{\Fg^{\ast }\rtimes G})$ from the right as well,  (\ref{GontrT*Gright}). A
Hamiltonian is invariant under this action if it is independent of the group
variable. According to the Poisson reduction formula (\ref{PR}), for
two such functions $\mathcal{H}=\mathcal{H}(\mu)$ and $\mathcal{F}=%
\mathcal{F}(\mu)$, the canonical Poisson bracket (\ref{PoissononT*G})
reduces to the Lie-Poisson bracket on the dual space $\Fg^{\ast }$. Namely,
\begin{equation}  \label{LPB}
\left\{ \mathcal{H},\mathcal{F}\right\}_{\Fg^{\ast }}(\mu)=\left\langle
\mu ,[\frac{\delta \mathcal{H}}{\delta \mu},\frac{\delta \mathcal{F}}{%
\delta \mu}]\right\rangle,
\end{equation}%
where $\frac{\delta \mathcal{H}}{\delta \mu},\frac{\delta \mathcal{F}}{%
\delta \mu}\in \mathfrak{g}$, and the bracket on the right hand side is the
Lie algebra bracket on $\mathfrak{g}$ assuming the reflexivity .

%The Poisson mapping relating
%these Poisson structures is the momentum mapping $\mathbf{J}_L$
%induced by the left action of $G$ on $\Fg^{\ast }\rtimes G$. This is the
%manifestation of the existence of a dual pair (c.f. see following appendix).
In the reduced picture, the dynamics is governed by the Lie-Poisson equations,
which can be obtained by setting $\frac{%
\delta \mathcal{H}}{\delta g}$ to be zero in (\ref{HameqonT*G}), see \cite%
{marsden1991symplectic},
\begin{equation}  \label{LPong*}
\dot{\mu}=ad^{\ast}_{\frac{\delta \mathcal{H}}{\delta \mu}}\mu
\end{equation}
for a Hamiltonian function $\mathcal{H}=\mathcal{H}(\mu)$.

Finally, the symplectic leaves of $\Fg%
^{\ast }$ equipped with the Lie-Poisson bracket (\ref{LPB}) are the
coadjoint orbits $\mathcal{O}_{\mu }$ of the symplectic reduction (\ref{SRT*G}),
using (\ref{KKS}), \cite{kostant1970orbits,kyrillov1974elements,souriau1966structure}.

\section{The Legendre Transformation} \label{LT}

In order to connect the Hamiltonian dynamics presented in (\ref{HameqonT*G}%
) to the Lagrangian dynamics (we recall below), the Hamilton's equations are written on the left trivialization
$G\ltimes \Fg^{\ast}$ of the cotangent bundle $T^{\ast}G$, by (\ref{lt*G}). To this end, it suffices to consider the mapping
\begin{equation}\label{LR}
\Phi:G\ltimes \Fg^{\ast}\ra\Fg^{\ast}\rtimes G \qquad
(g,\mu^{L})\rightarrow(Ad^{\ast}_g\mu^{L},g).
\end{equation}
Then, for any
real-valued function $\mathcal{H}$ on $\Fg^{\ast}\rtimes G$, there is a
real-valued function $\mathcal{H}^{L}=\mathcal{H}\circ \Phi$ on $G\ltimes %
\Fg^{\ast}$. Substitutions of the variations
\begin{equation*}
\frac{\delta\mathcal{H}}{\delta g}=\frac{\delta\mathcal{H}^{L}}{\delta g}%
+T^{\ast}R_{g^{-1}}\circ Ad^{\ast}_g \circ ad^{\ast}_\frac{\delta\mathcal{H}%
^L}{\delta \mu^{L}}\mu^{L}, \qquad \frac{\delta\mathcal{H}}{\delta \mu}=Ad_g%
\frac{\delta\mathcal{H}^L}{\delta\mu^{L}}
\end{equation*}
into the Hamilton's equations (\ref{HameqonT*G}), and a direct calculation,
result with the left version of the Hamilton's equations
\begin{equation}  \label{HameqonT*GL}
\dot{\mu}=-T^{\ast}L_g(\frac{\delta \mathcal{H}^{L}}{\delta g})-ad^{\ast}_{%
\frac{\delta \mathcal{H}^{L}}{\delta \mu^{L}}}\mu^{L}, \qquad \dot{g}%
=TL_{g}(\frac{\delta \mathcal{H}^{L}}{\delta \mu^{L}})
\end{equation}
on $G\ltimes \Fg^{\ast}$. If, in particular, the Hamiltonian is free from the
group variable, then the last equations descend to the Lie-Poisson equations with a negative sign
\begin{equation}  \label{LP-L}
\dot{\mu}=-ad^{\ast}_{\frac{\delta \mathcal{H}^{L}}{\delta \mu^{L}}%
}\mu^{L}, \qquad \dot{g}=TL_{g}(\frac{\delta \mathcal{H}^{L}}{\delta \mu^{L}%
}).
\end{equation}
On the other hand, the trivialized Euler-Lagrange equations generated by a Lagrangian $\mathcal{L}=\mathcal{L}(g,\xi)$ on the left
trivialization $G\ltimes\Fg$ of the tangent bundle $TG$ are
\begin{equation}
\frac{d}{dt}\frac{\delta\mathcal{L}}{\delta\xi}=T_{e}^{\ast}L_{g}\frac {%
\delta\mathcal{L}}{\delta g}-ad_{\xi}^{\ast}\frac{\delta\mathcal{L}}{%
\delta\xi},   \label{preeulerlagrange}
\end{equation}
\cite{essu2016LagMP}. For $\mu^L=%
\frac{\delta\mathcal{L}}{\delta\xi}$, the function
\begin{equation}
\mathcal{H}^L(g,\xi,\mu^L)=\langle \mu^L, \xi \rangle - \mathcal{L}(g,\xi)
\end{equation}
on the left trivialization $G\ltimes(\Fg\times\Fg^{\ast})$ of the Whitney
product $T^\ast G \times_G TG$ is known as the total energy function. If the Lagrangian is
nondegenerate, then $\xi$ can be solved as a function of $\mu^L$ from the equality
$\mu^L=\frac{\delta\mathcal{L}}{\delta\xi}$ to obtain a nondegenerate
Hamiltonian function on $G\ltimes\Fg^{\ast}$. In this case, the Legendre transformation converts the Hamilton's
equations (\ref{HameqonT*GL}) to the trivialized Euler-Lagrange equations (%
\ref{preeulerlagrange}). It is immediate now to write the Legendre
transformation for the left invariant formulations that is, when the
Lagrangian (resp. Hamiltonian) function is free from group variable. We
refer \cite{esen2014tulczyjew,esen2015tulczyjew} for the cases where the
dynamics is not necessarily nondegenerate.

Finally, the Legendre transformation from the left Lagrangian dynamics to the right Hamilton's
equations is given by the commutative diagram
\begin{equation}  \label{LLag-RHAm}
\xymatrix{G\ltimes \Fg \ar[rr]^{\Phi\circ \mathbb{F}\mathcal{L}}
\ar[dr]_{\mathcal{L}} && \Fg^{\ast }\rtimes G \ar[dl]^{\mathcal{H}}\\
&\mathbb{R}}
\end{equation}
where $\mathbb{F}\mathcal{L}$ stands for the fiber derivative $\frac{\delta%
\mathcal{L}}{\delta\xi}$ of the Lagrangian, and
$\Phi\circ \mathbb{F}\mathcal{L}: (g,\xi)\mapsto (Ad^{\ast}_{g^{-1}}%
\frac{\delta\mathcal{L}}{\delta\xi},g)$ is the Legendre transformation.

\section{Lie coalgebras} \label{LCo}

\label{CBMPLG} A vector space $\mathfrak{G}$ is called a Lie coalgebra if
there is a linear map $\d:\mathfrak{G} \ra \mathfrak{G} \ot \mathfrak{G}$
denoted by $\d(\mu) = \mu\nsb{1} \ot \mu\nsb{2}$ such that
\begin{align*}
\mu\nsb{1} \ot \mu\nsb{2} = - \mu\nsb{2} \ot \mu\nsb{1}, \qquad \mu\nsb{1} %
\ot \mu\nsb{2}\nsb{1} \ot \mu\nsb{2}\nsb{2} + \mu\nsb{2}\nsb{1} \ot \mu%
\nsb{2}\nsb{2} \ot \mu\nsb{1} + \mu\nsb{2}\nsb{2} \ot \mu\nsb{1} \ot \mu%
\nsb{2}\nsb{1} = 0,
\end{align*}
see \cite{Majid-book,Mich80}. The linear algebraic dual $%
\mathfrak{G}^\ast$ of $\mathfrak{G}$ is a Lie algebra. Inversely, dual $\Fg%
^{\ast}$ of the Lie algebra $\Fg$ is a Lie coalgebra with the cobracket
given by
\begin{equation*}
\langle \d(\mu),(\xi_1,\xi_2)\rangle = \langle \mu,[\xi_1,\xi_2]\rangle.
\end{equation*}
Let $(\mathfrak{G},\d)$ be a Lie coalgebra and $V$ be an arbitrary vector
space. $V$ is called a (right) $\mathfrak{G}$-comodule if there exists a map,
called the right coaction of a Lie coalgebra, $\nb:V \ra V\ot \mathfrak{G}$
given by $\nb(v) = v\nsb{0} \ot v\nsb{1}$ such that
\begin{equation*}
v\nsb{0} \ot v\nsb{1}\nsb{1} \ot v\nsb{1}\nsb{2} = v\nsb{0}\nsb{0} \ot v%
\nsb{0}\nsb{1} \ot v\nsb{1} - v\nsb{0}\nsb{0} \ot v\nsb{1} \ot v\nsb{0}%
\nsb{1}
\end{equation*}
for any $v\in V$. Similarly, $V$ is called a left $\mathfrak{G}$-comodule, if there is a mapping, called the left coaction
of a Lie coalgebra, $\nb:V \ra \mathfrak{G}\ot V$, $\nb(v) = v\nsb{-1} \ot v%
\nsb{0}$, such that
\begin{equation*}
v\nsb{-1}\nsb{1} \ot v\nsb{-1}\nsb{2} \ot v\nsb{0} = v\nsb{-1} \ot v\nsb{0}%
\nsb{-1} \ot v\nsb{0}\nsb{0} - v\nsb{0}\nsb{-1} \ot v\nsb{-1} \ot v\nsb{0}%
\nsb{0}
\end{equation*}
for any $v\in V$.

\bibliographystyle{amsplain}
\bibliography{references}

\end{document}